\newtheorem{lemma}{Lemma}
\newtheorem{proposition}{Proposition}
\newtheorem{theorem}{Theorem}
\newtheorem{corollary}{Corollary}
\newtheorem{assumption}{Assumption}
\newtheorem*{remark}{Remark}
\newcommand{\norm}[1]{\left\Vert #1 \right\Vert}
\newcommand{\capH}{H}
\newcommand{\capB}{B}
\newcommand{\capM}{M}
\newcommand{\capD}{D}
\newcommand{\cR}{\mathcal{R}}
\newcommand{\cP}{\mathcal{P}}
\newcommand{\cQ}{\mathcal{Q}}
\newcommand{\cN}{\mathcal{N}}
\newcommand{\cM}{\mathcal{M}}
\newcommand{\cA}{\mathcal{A}}
\newcommand{\cB}{\mathcal{B}}
\newcommand{\R}{\mathbb{R}}
\newcommand{\Z}{\mathbb{Z}}
\newcommand{\N}{\mathbb{N}}
\newcommand{\K}{\mathbb{K}}
\DeclareMathOperator{\spann}{span}
\DeclareMathOperator{\conv}{conv}
\DeclareMathOperator{\clconv}{\overline{conv}}
\DeclareMathOperator{\intt}{int}
\DeclareMathOperator{\bd}{bd}
\DeclareMathOperator{\rec}{rec}
\DeclareMathOperator{\sign}{sign}
\begin{document}

\title{Two-Term Disjunctions on the Second-Order Cone}
\author{Fatma K{\i}l{\i}n\c{c}-Karzan\thanks{Tepper School of Business, Carnegie Mellon University, Pittsburgh, PA, {\tt fkilinc@andrew.cmu.edu}.}
\and Sercan Y{\i}ld{\i}z \thanks{Tepper School of Business, Carnegie Mellon University, Pittsburgh, PA, {\tt syildiz@andrew.cmu.edu}.}
}

\date{\today}

\maketitle

\begin{abstract}
Balas introduced disjunctive cuts in the 1970s for mixed-integer linear programs. Several recent papers have attempted to extend this work to mixed-integer conic programs. In this paper we study the structure of the convex hull of a two-term disjunction applied to the second-order cone, and develop a methodology to derive closed-form expressions for convex inequalities describing the resulting convex hull. Our approach is based on first characterizing the structure of undominated valid linear inequalities for the disjunction and then using conic duality to derive a family of convex, possibly nonlinear, valid inequalities that correspond to these linear inequalities. We identify and study the cases where these valid inequalities can equivalently be expressed in conic quadratic form and where a single inequality from this family is sufficient to describe the convex hull. In particular, our results on two-term disjunctions on the second-order cone generalize related results on split cuts by Modaresi, K{\i}l{\i}n\c{c}, and Vielma, and by Andersen and Jensen.

\vspace{3mm}

\noindent {\bf Keywords:} Mixed-integer conic programming, second-order cone programming, cutting planes, disjunctive cuts

\end{abstract}

\section{Introduction}
A mixed-integer conic program is a problem of the form
\begin{equation*}
\sup\{d^\top x:\;Ax=b,\;x\in\K,\;x_j\in\Z\;\forall j\in J\}
\end{equation*}
where $\K\subset\R^n$ is a regular (full-dimensional, closed, convex, and pointed) cone, $A$ is an $m\times n$ real matrix, $d$ and $b$ are real vectors of appropriate dimensions, and $J\subseteq\{1,\ldots,n\}$. Mixed-integer conic programming (MICP) models arise naturally as robust versions of mixed-integer linear programming (MILP) models in finance, management, and engineering \cite{BS2013b,Burer_Letchford_12}.
MILP is the special case of MICP where $\K$ is the nonnegative orthant, and it has itself numerous applications. A successful approach to solving MILP problems has been to first solve the continuous relaxation, then add cuts, and finally perform branch-and-bound using this strengthened formulation. A powerful way of generating such cuts is to impose a valid disjunction on the continuous relaxation and derive tight convex inequalities for the resulting disjunctive set. Such inequalities are known as \emph{disjunctive cuts}. Specifically, the integrality conditions on the variables $x_j$, $j\in J$, imply linear \emph{split disjunctions} of the form $\pi^\top x\leq\pi_0\,\vee\,\pi^\top x\geq\pi_0+1$ where $\pi_0\in\Z$, $\pi_j\in\Z$, $j\in J$, and $\pi_j=0$, $j\not\in J$. Following this approach, the feasible region for MICP problems can be relaxed to $\{x\in\K:\,Ax=b,\pi^\top x\leq\pi_0\vee\pi^\top x\geq\pi_0+1\}$. More general two-term disjunctions arise in complementarity \cite{JSRF2006,Tawarmalani_Richard_Chung_10} and other non-convex optimization \cite{B2012,Burer_Saxena_12} problems. Therefore, it is interesting to study relaxations of MICP problems of the form
\begin{gather}
\sup\{d^\top x:\;x\in C_1\cup C_2\}\quad\text{where}\notag\\
C_i:=\{x\in\K:\;Ax=b,\;c_i^\top x\geq c_{i,0}\}\quad\text{for}\quad i\in\{1,2\}\label{eq:DisjunctiveSet}
\end{gather}
and to derive strong valid inequalities for the convex hull $\conv(C_1\cup C_2)$, or the closed convex hull $\clconv(C_1\cup C_2)$. When $\K$ is the nonnegative orthant, Bonami et al. \cite{BCCMZ2013} characterize $\clconv(C_1\cup C_2)$ by a finite set of linear inequalities. The purpose of this paper is to study the structure of $\clconv(C_1\cup C_2)$ for other cones such as the second-order (Lorentz) cone $\K_2^n:=\{x\in\R^n:\,\norm{(x_1;\ldots;x_{n-1})}_2\leq x_n\}$, or more generally the $p$-order cone $\K^n_p:=\{x\in\R^n:\,\norm{(x_1;\ldots;x_{n-1})}_p\leq x_n\}$ where $p\in(1,\infty)$, and provide the explicit description of $\clconv(C_1\cup C_2)$ with convex inequalities in the space of the original variables. We first review related results from the literature.

Disjunctive cuts were introduced by Balas \cite{B1971} for MILP in the early 1970s. Since then, disjunctive cuts have been studied extensively in mixed integer linear and nonlinear optimization \cite{Balas_79,Sherali_Shetti_80,Balas_Ceria_Cornuejols_93,Cornuejols_Lemarechal_06,Saxena_Bonami_Lee_08,Cadoux_10,Kilinc_Linderoth_Luedtke_10,Burer_Saxena_12}. \emph{Chv{\'a}tal-Gomory}, \emph{lift-and-project}, \emph{mixed-integer rounding (MIR)}, and \emph{split cuts} are all special types of disjunctive cuts. Recent efforts on extending the cutting plane theory for MILP to the MICP setting include the work of \c{C}ezik and Iyengar \cite{CI2005} for Chvatal-Gomory cuts, Stubbs and Mehrotra \cite{SM1999}, Drewes \cite{D2009}, Drewes and Pokutta \cite{Drewes_Pokutta_10}, and Bonami \cite{B2011} for lift-and-project cuts, and Atamt\"{u}rk and Narayanan \cite{AV2010,Atamturk_Narayanan_11} for MIR cuts. K{\i}l{\i}n\c{c}-Karzan \cite{KK} analyzed properties of minimal valid linear inequalities for general conic sets with a disjunctive structure and showed that these are sufficient to describe the closed convex hull. Such general sets from \cite{KK} include two-term disjunctions on the cone $\K$ considered in this paper. Bienstock and Michalka \cite{BM} studied the characterization and separation of valid linear inequalities that convexify the epigraph of a convex, differentiable function restricted to a non-convex domain. In the last few years, there has been growing interest in developing closed-form expressions for convex inequalities that fully describe the convex hull of a disjunctive conic set. Dadush et al. \cite{DDV2011} and Andersen and Jensen \cite{AJ2013} derived split cuts for ellipsoids and the second-order cone, respectively. Modaresi et al. \cite{MKV} extended this work on split disjunctions to essentially all cross-sections of the second-order cone, and studied their theoretical and computational relations with extended formulations and conic MIR inequalities in \cite{MKV2}. Belotti et al. \cite{Belotti_Goez_Polik_Terlaky_12} studied the families of quadratic surfaces having fixed intersections with two given hyperplanes and showed that these families can be described by a single parameter. Also, in \cite{BGPRT}, they  identified a procedure for constructing two-term disjunctive cuts under the assumptions that $C_1\cap C_2=\emptyset$ and the sets $\{x\in\K:\,Ax=b,c_1^\top x=c_{1,0}\}$ and $\{x\in\K:\,Ax=b,c_2^\top x=c_{2,0}\}$ are bounded.

In this paper we study general two-term disjunctions on conic sets and give closed-form expressions for the tightest disjunctive cuts that can be obtained from these disjunctions in a large class of instances. We focus on the case where $C_1$ and $C_2$ in \eqref{eq:DisjunctiveSet} above have an empty set of equations $Ax=b$. That is to say, we consider
\begin{equation}\label{eq:DisjSet}
C_1:=\{x\in\K:\,c_1^\top x\geq c_{1,0}\}\;\;\;\text{and}\;\;\;
C_2:=\{x\in\K:\,c_2^\top x\geq c_{2,0}\}.
\end{equation}
We note, however, that our results can easily be extended to two-term disjunctions on sets of the form $\{x\in\R^n:\,Ax-b\in\K\}$, where $A$ has full row rank, through the affine transformation discussed in \cite{AJ2013}.
Our main contribution is to give an explicit outer description of $\clconv(C_1\cup C_2)$ when $\K$ is the second-order cone. Similar results have previously appeared in \cite{AJ2013}, \cite{MKV}, and \cite{BGPRT}. Nevertheless, our work is set apart from \cite{AJ2013} and \cite{MKV} by the fact that we study two-term disjunctions on the cone $\K$ in their full generality and do not restrict our attention to split disjunctions, which are defined by {\em parallel} hyperplanes. Furthermore, unlike \cite{BGPRT}, we do not assume that $C_1\cap C_2=\emptyset$ and the sets $\{x\in\K:\,c_1^\top x=c_{1,0}\}$ and $\{x\in\K:\,c_2^\top x=c_{2,0}\}$ are bounded. In the absence of such assumptions, the resulting convex hulls turn out to be significantly more complex in our general setting. We also stress that our proof techniques originate from a conic duality perspective and are completely different from what is employed in the aforementioned papers; in particular, we believe that they are more intuitive in terms of their derivation, and more transparent in understanding the structure of the resulting convex hulls. Therefore, we hope that they have the potential to be instrumental in extending several important existing results in this growing area of research.

The remainder of this paper is organized as follows. Section~\ref{sec:preliminaries} introduces the tools that will be useful to us in our analysis. In Section~\ref{sec:sub:notation} we set out our notation and basic assumptions. In Section~\ref{sec:sub:VLIProperties} we characterize the structure of undominated valid linear inequalities describing $\clconv(C_1\cup C_2)$ when $\K$ is a regular cone. In Section~\ref{sec:SecondOrder} we focus on the case where $\K$ is the second-order cone. In Section~\ref{sec:sub:CutDerivation} we state and prove our main result, Theorem~\ref{thm:main}. The proof uses conic duality, along with the characterization from Section~\ref{sec:sub:VLIProperties}, to derive a family of convex, possibly linear or conic, valid inequalities \eqref{eq:main} for $\clconv(C_1\cup C_2)$. In Sections~\ref{sec:sub:ConicQuadratic} and \ref{sec:SingleConvex}, we identify and study the cases where these inequalities can equivalently be expressed in conic quadratic form and where only one inequality of the form \eqref{eq:main} is sufficient to describe $\clconv(C_1\cup C_2)$. Our results imply in particular that a single conic valid inequality is always sufficient for split disjunctions. Nevertheless, there are cases where it is not possible to obtain $\clconv(C_1\cup C_2)$ with a single inequality of the form \eqref{eq:main}. In Section~\ref{sec:MultipleIneqs} we study those cases and outline a technique to characterize $\clconv(C_1\cup C_2)$ with closed-form formulas.
While the majority of our work is geared towards the second-order cone, in Section~\ref{sec:p-order} we look at the $n$-dimensional $p$-order cone $\K_p^n$ with $p\in(1,\infty)$ and study elementary split disjunctions on this set. That is, we consider sets $C_1$ and $C_2$ defined as in \eqref{eq:DisjSet} where $c_1$ and $c_2$ are multiples of the $i^{\text{th}}$ standard unit vector, $e^i$, $i\in\{1,\ldots,n-1\}$. We show that one can obtain a single conic inequality that describes the convex hull using our framework in this setup. This provides an alternative proof of a similar result by Modaresi et al. \cite{MKV}.

\section{Preliminaries}\label{sec:preliminaries}

The main purpose of this section is to characterize the structure of undominated valid linear inequalities for $\clconv(C_1\cup C_2)$ when $\K$ is a regular cone and $C_1$ and $C_2$ are defined as in \eqref{eq:DisjSet}. First, we present our notation and assumptions.

\subsection{Notation and Assumptions}\label{sec:sub:notation}

Given a set $S\subseteq\R^n$, we let $\spann S$, $\intt S$, and $\bd S$ denote the linear span, interior, and boundary of $S$, respectively. We use $\rec S$ to refer to the recession cone of a convex set $S$. The {\em dual cone} of $\K\subseteq\R^n$ is $\K^*:=\{y\in\R^n:\,y^\top x\geq 0\,\forall x\in\K\}$. Recall that the dual cone $\K^*$ of a regular cone $\K$ is also regular and the dual of $\K^*$ is $\K$ itself.

We can always scale the inequalities $c_1^\top x\geq c_{1,0}$ and $c_2^\top x\geq c_{2,0}$ defining the disjunction so that their right-hand sides are $0$ or $\pm 1$. Therefore, from now on we assume that $c_{1,0},c_{2,0}\in\{0,\pm 1\}$ for notational convenience.

When $C_1\subseteq C_2$, we have $\clconv(C_1\cup C_2)=C_2$. Similarly, when $C_1\supseteq C_2$, we have $\clconv(C_1\cup C_2)=C_1$. In the remainder we focus on the case where $C_1\not\subseteq C_2$ and $C_1\not\supseteq C_2$.

\begin{assumption}\label{As:A1}
$C_1\not\subseteq C_2$ and $C_1\not\supseteq C_2$.
\end{assumption}

In particular, Assumption~\ref{As:A1} implies $c_i\not\in-\K^*$ when $c_{i,0}=+1$ and $c_i\not\in\K^*$ when $c_{i,0}=-1$. We also need the following technical assumption in our analysis.

\begin{assumption}\label{As:A2}
$C_1$ and $C_2$ are strictly feasible sets. That is, $C_1\cap\intt\K\neq\emptyset$ and $C_2\cap\intt\K\neq\emptyset$.
\end{assumption}

The set $C_i$ is always strictly feasible when it is nonempty and $c_{i,0}\in\{\pm 1\}$. Therefore, we need Assumption~\ref{As:A2} to supplement Assumption~\ref{As:A1} only when $c_{1,0}=0$ or $c_{2,0}=0$. Note that, under Assumption~\ref{As:A2}, the sets $C_1$ and $C_2$ always have nonempty interior. Assumptions~\ref{As:A1} and \ref{As:A2} have several simple implications, which we state next. The first lemma extends ideas from Balas \cite{B1974} to disjunctions on more general convex sets. Its proof is left to the appendix.

\begin{lemma}\label{lem:closure}
Let $S\subset\R^n$ be a closed, convex, pointed set, $S_1:=\{x\in S:\,c_1^\top x\geq c_{1,0}\}$, and $S_2:=\{x\in S:\,c_2^\top x\geq c_{2,0}\}$ for $c_1,c_2\in\R^n$ and $c_{1,0},c_{2,0}\in\R$. Suppose $S_1\not\subseteq S_2$ and $S_1\not\supseteq S_2$. Then
\begin{enumerate}[(i)]
\item $S_1\cup S_2$ is not convex unless $S_1\cup S_2=S$,
\item $\clconv(S_1\cup S_2)=\conv(S_1^+\cup S_2^+)$ where $S_1^+:=S_1+\rec S_2$ and $S_2^+:=S_2+\rec S_1$.
\end{enumerate}
\end{lemma}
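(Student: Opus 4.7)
For part~(i), my plan is to argue by contradiction: assume $S_1\cup S_2$ is convex while $S_1\cup S_2\neq S$, and extract a witness $z\in S$ with $c_1^\top z<c_{1,0}$ and $c_2^\top z<c_{2,0}$. Using the hypothesis of the lemma I pick $y_1\in S_1\setminus S_2$ and $y_2\in S_2\setminus S_1$. Along the segment $[y_1,z]\subseteq S$, the value $c_1^\top(\cdot)$ moves from $\geq c_{1,0}$ to $<c_{1,0}$, while $c_2^\top(\cdot)$ stays strictly below $c_{2,0}$ at both endpoints and hence on the entire segment by linearity; continuity thus produces $w_1\in S$ with $c_1^\top w_1=c_{1,0}$ and $c_2^\top w_1<c_{2,0}$, so $w_1\in S_1\setminus S_2$. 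An analogous construction on $[y_2,z]$ gives $w_2\in S_2\setminus S_1$ with $c_2^\top w_2=c_{2,0}$. The segment $[w_1,w_2]$ lies in $S$ by convexity of $S$, yet its relative interior has both $c_1^\top<c_{1,0}$ and $c_2^\top<c_{2,0}$ strictly, placing those points outside $S_1\cup S_2$ and contradicting its supposed convexity.

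For part~(ii), the inclusion $\conv(S_1^+\cup S_2^+)\subseteq\clconv(S_1\cup S_2)$ follows once I show $S_i^+\subseteq\clconv(S_1\cup S_2)$. Given $x=y+d$ with $y\in S_i$ and $d\in\rec S_{3-i}$, I fix any $z\in S_{3-i}$ and observe that $x_n:=(1-1/n)\,y+(1/n)(z+nd)$ is a convex combination of $y\in S_i$ and $z+nd\in S_{3-i}$ that converges to $y+d=x$ as $n\to\infty$.

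The reverse inclusion $\clconv(S_1\cup S_2)\subseteq\conv(S_1^+\cup S_2^+)$ is the heart of the proof. I take $x=\lim_n x_n$ with $x_n=\lambda_n a_n+(1-\lambda_n)b_n$, $a_n\in S_1$, $b_n\in S_2$, $\lambda_n\in[0,1]$, and pass to a subsequence so that $\lambda_n\to\lambda^*\in[0,1]$. The decisive input is pointedness of $\rec S$: since $S_i\subseteq S$ I have $\rec S_i\subseteq\rec S$, and the hypothesis that $S$ is pointed says $\rec S$ contains no pair of opposite nonzero directions. When $\lambda^*\in(0,1)$, any unbounded subsequence of $a_n$ or $b_n$ would force limiting unit directions $d_1\in\rec S_1$ and $d_2\in\rec S_2$ with $d_2=-d_1$, contradicting pointedness; hence both sequences are bounded and $x\in\conv(S_1\cup S_2)$. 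When $\lambda^*=0$, the same argument forces $b_n$ to remain bounded with limit $b^*\in S_2$, while $\lambda_n a_n$ converges to some $\mu d_1\in\rec S_1$, giving $x=b^*+\mu d_1\in S_2+\rec S_1=S_2^+$. The case $\lambda^*=1$ is symmetric and yields $x\in S_1^+$. Since $S_i\subseteq S_i^+$, in every case $x\in\conv(S_1^+\cup S_2^+)$.

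The main obstacle is precisely this recession-cone bookkeeping in the reverse inclusion: without pointedness of $\rec S$ one cannot preclude canceling escapes to infinity, and $\conv(S_1^+\cup S_2^+)$ could in principle fail to be closed. Exploiting the pointedness cleanly in each regime of $\lambda^*$ is what simultaneously identifies $\clconv(S_1\cup S_2)$ with $\conv(S_1^+\cup S_2^+)$ and certifies that the latter set is already closed.
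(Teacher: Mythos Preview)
Your argument is correct. Part~(i) is essentially identical to the paper's proof: both pick a point $z\in S\setminus(S_1\cup S_2)$ together with witnesses $y_i\in S_i\setminus S_{3-i}$, slide along the segments $[y_i,z]$ to manufacture boundary points $w_1\in S_1\setminus S_2$ and $w_2\in S_2\setminus S_1$ with $c_i^\top w_i=c_{i,0}$, and observe that the open segment $(w_1,w_2)$ lies in $S$ but outside $S_1\cup S_2$.

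For part~(ii) the two proofs share the limit construction $x_n=(1-1/n)y+(1/n)(z+nd)$ establishing $S_i^+\subseteq\clconv(S_1\cup S_2)$, but diverge on the reverse inclusion. The paper dispatches $\clconv(S_1\cup S_2)\subseteq\conv(S_1^+\cup S_2^+)$ in one line by first quoting Rockafellar: Corollary~9.1.2 gives that $S_1^+$ and $S_2^+$ are closed with the common recession cone $\rec S_1+\rec S_2$ (this is where pointedness of $S$ enters), and Corollary~9.8.1 then says $\conv(S_1^+\cup S_2^+)$ is closed, so the containment is immediate from $S_i\subseteq S_i^+$. You instead give a self-contained sequential argument, writing $x=\lim_n(\lambda_n a_n+(1-\lambda_n)b_n)$ and splitting on $\lambda^*=\lim\lambda_n$: pointedness of $\rec S$ is invoked directly to rule out opposite escaping directions, which forces boundedness (hence convergence in $S_1\times S_2$) when $\lambda^*\in(0,1)$, and in the degenerate cases identifies the limit of $\lambda_n a_n$ (respectively $(1-\lambda_n)b_n$) as a recession direction of $S_1$ (respectively $S_2$). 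The paper's route is shorter if one is willing to cite the Rockafellar corollaries as black boxes; your route is more elementary and has the side benefit of exhibiting, case by case, exactly how a limit point of $\conv(S_1\cup S_2)$ decomposes as an element of $S_2+\rec S_1$ or $S_1+\rec S_2$, which also makes the closedness of $\conv(S_1^+\cup S_2^+)$ a consequence rather than an input.
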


Clearly, when $\clconv(C_1\cup C_2)\neq\K$, we do not need to derive any new inequalities to get a description of the closed convex hull. The next lemma obtains a natural consequence of Assumption~\ref{As:A1} through conic duality.

\begin{lemma}\label{lem:inconsistent}
Consider $C_1, C_2$ defined as in \eqref{eq:DisjSet}. Suppose Assumption~\ref{As:A1} holds. Then the following system of inequalities in the variable $\beta$ is inconsistent:
\begin{equation}\label{eq:inconsistent1}
\beta\geq 0,\quad\beta c_{1,0}\geq c_{2,0},\quad c_2-\beta c_1\in\K^*.
\end{equation}
Similarly, the following system of inequalities in the variable $\beta$ is inconsistent:
\begin{equation}\label{eq:inconsistent2}
\beta\geq 0,\quad\beta c_{2,0}\geq c_{1,0},\quad c_1-\beta c_2\in\K^*.
\end{equation}
\end{lemma}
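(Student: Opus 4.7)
The plan is to prove both statements by contradiction, deriving the containments forbidden by Assumption~\ref{As:A1} from any hypothetical feasible $\beta$. The two systems are symmetric (swap the roles of $c_1,c_{1,0}$ and $c_2,c_{2,0}$), so it suffices to give the argument for \eqref{eq:inconsistent1} and then invoke symmetry for \eqref{eq:inconsistent2}.

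For the first system, I would suppose $\beta\geq 0$ satisfies $\beta c_{1,0}\geq c_{2,0}$ and $c_2-\beta c_1\in\K^*$, and then show $C_1\subseteq C_2$. Pick any $x\in C_1$, so $x\in\K$ and $c_1^\top x\geq c_{1,0}$. The key algebraic step is the decomposition
\[
c_2^\top x \;=\; (c_2-\beta c_1)^\top x \;+\; \beta\,c_1^\top x.
\]
The first term is nonnegative because $c_2-\beta c_1\in\K^*$ and $x\in\K$ (this is exactly the definition of the dual cone). The second term is bounded below by $\beta c_{1,0}$ because $\beta\geq 0$ and $c_1^\top x\geq c_{1,0}$, and in turn $\beta c_{1,0}\geq c_{2,0}$ by hypothesis. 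Chaining these inequalities gives $c_2^\top x\geq c_{2,0}$, hence $x\in C_2$. Since $x\in C_1$ was arbitrary, this yields $C_1\subseteq C_2$, contradicting Assumption~\ref{As:A1}.

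For the second system the argument is identical with the roles of the two indices interchanged: assuming $\beta\geq 0$, $\beta c_{2,0}\geq c_{1,0}$, and $c_1-\beta c_2\in\K^*$, the decomposition $c_1^\top x=(c_1-\beta c_2)^\top x+\beta c_2^\top x$ applied to any $x\in C_2$ produces $c_1^\top x\geq c_{1,0}$, giving $C_2\subseteq C_1$ and contradicting Assumption~\ref{As:A1}.

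I do not anticipate a serious obstacle; the proof is essentially a one-line dual-cone calculation twice. The only subtlety worth flagging in the write-up is that the argument uses all three hypotheses $\beta\geq 0$, $\beta c_{1,0}\geq c_{2,0}$, and $c_2-\beta c_1\in\K^*$ in an essential way, and it does not require the normalization $c_{i,0}\in\{0,\pm 1\}$ nor Assumption~\ref{As:A2}. It is, in fact, the natural conic-duality analogue of the familiar LP fact that a nonnegative combination of one valid inequality dominating another forces the implied containment.
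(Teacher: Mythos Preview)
Your proof is correct and follows essentially the same approach as the paper: assume a feasible $\beta$, use $c_2-\beta c_1\in\K^*$ together with $\beta\geq 0$ and $\beta c_{1,0}\geq c_{2,0}$ to deduce $c_2^\top x\geq c_{2,0}$ for every $x\in C_1$, and conclude $C_1\subseteq C_2$ in contradiction with Assumption~\ref{As:A1}. The paper's write-up is slightly terser but the logical content is identical, including the appeal to symmetry for \eqref{eq:inconsistent2}.
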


\begin{proof}
Suppose there exists $\beta^*$ satisfying \eqref{eq:inconsistent1}. For all $x\in\K$, this implies $(c_2-\beta^*c_1)^\top x\geq 0\geq c_{2,0}-\beta^*c_{1,0}$. Then any point $x\in C_1$ satisfies $\beta^*c_1^\top x\geq\beta^*c_{1,0}$ and therefore, $c_2^\top x\geq c_{2,0}$. Hence, $C_1\subseteq C_2$ which contradicts Assumption~\ref{As:A1}. The proof for the inconsistency of \eqref{eq:inconsistent2} is similar.
\end{proof}

\subsection{Properties of Undominated Valid Linear Inequalities}\label{sec:sub:VLIProperties}

A valid linear inequality $\mu^\top x\geq\mu_0$ for a feasible set $S\subseteq\K$ is said to be {\em tight} if $\inf_x\{\mu^\top x:\,x\in S\}=\mu_0$ and {\em strongly tight} if there exists $x^*\in S$ such that $\mu^\top x^*=\mu_0$.

A valid linear inequality $\nu^\top x\geq\nu_0$ for a strictly feasible set $S\subseteq\K$ is said to {\em dominate} another valid linear inequality $\mu^\top x\geq\mu_0$ if it is not a positive multiple of $\mu^\top x\geq\mu_0$ and implies $\mu^\top x\geq\mu_0$ together with the cone constraint $x\in\K$. Furthermore, a valid linear inequality $\mu^\top x\geq\mu_0$ is said to be {\em undominated} if there does not exists another valid linear inequality $\nu^\top x\geq\nu_0$ such that $(\mu-\nu,\mu_0-\nu_0)\in\K^*\times-\R_+\setminus\{(0,0)\}$. This notion of domination is closely tied with the $\K$-minimality definition of \cite{KK} which says that a valid linear inequality $\mu^\top x\geq\mu_0$ is {\em $\K$-minimal} if there does not exist another valid linear inequality $\nu^\top x\geq\nu_0$ such that $(\mu-\nu,\mu_0-\nu_0)\in(\K^*\setminus\{0\})\times-\R_+$. In particular, a valid linear inequality for $\clconv(C_1\cup C_2)$ is undominated in the sense considered here if and only if it is $\K$-minimal and tight on $\clconv(C_1\cup C_2)$. In \cite{KK}, $\K$-minimal inequalities are defined and studied for sets of the form
\begin{equation*}
\left\{x\in\R^n:\,Ax\in\capH,\,x\in\K\right\},
\end{equation*}
where $\capH$ is an arbitrary set and $\K$ is a regular cone. Our set $C_1\cup C_2$ can be represented in the form above as
\begin{equation*}
\left\{x\in\R^n:\,\begin{pmatrix}c_1^T \\ c_2^T\end{pmatrix} x=\left\{{\{c_{1,0}\}+\R_+\choose\R}\right\} \bigcup \left\{{\R\choose \{c_{2,0}\}+\R_+}\right\},~x\in\K \right\}.
\end{equation*}
Because $C_1\cup C_2$ is full-dimensional under Assumption~\ref{As:A2}, Proposition 1 of \cite{KK} can be used to conclude that the extreme rays of the convex cone of valid linear inequalities
\begin{equation*}
\capM:=\left\{(\mu,\mu_0)\in\R^n\times\R:\;\mu^\top x\geq\mu_0\;\forall x\in\clconv(C_1\cup C_2)\right\}
\end{equation*}
are either tight, $\K$-minimal inequalities or implied by the cone constraint $x\in\K$. Hence, one needs to add only undominated valid linear inequalities to the cone constraint $x\in\K$ to obtain an outer description of $\clconv(C_1\cup C_2)$.

Because $C_1$ and $C_2$ are strictly feasible sets by Assumption~\ref{As:A2}, conic duality implies that a linear inequality $\mu^\top x\geq\mu_0$ is valid for $\clconv(C_1\cup C_2)$ if and only if there exist $\alpha_1,\alpha_2,\beta_1,\beta_2$ such that $(\mu,\mu_0,\alpha_1,\alpha_2,\beta_1,\beta_2)$ satisfies
\begin{equation}\label{eq:VLI}
\begin{gathered}
\mu=\alpha_1+\beta_1c_1,\\
\mu=\alpha_2+\beta_2c_2,\\
\beta_1 c_{1,0}\geq\mu_0,\quad\beta_2 c_{2,0}\geq\mu_0,\\
\alpha_1,\alpha_2\in\K^*,\quad\beta_1,\beta_2\in\R_+.
\end{gathered}
\end{equation}
This system can be reduced slightly when we consider {\em undominated} valid linear inequalities.

\begin{proposition}\label{prop:VLIReduced}
Consider $C_1, C_2$ defined as in \eqref{eq:DisjSet} with $c_{1,0},c_{2,0}\in\{0,\pm 1\}$. Suppose Assumptions~\ref{As:A1} and \ref{As:A2} hold.
Then, up to positive scaling, any undominated valid linear inequality for $\clconv(C_1\cup C_2)$ has the form $\mu^\top x\geq\min\{c_{1,0},c_{2,0}\}$ with $(\mu,\alpha_1,\alpha_2,\beta_1,\beta_2)$ satisfying
\begin{equation}\label{eq:VLIReduced}
\begin{gathered}
\mu=\alpha_1+\beta_1c_1,\\
\mu=\alpha_2+\beta_2c_2,\\
\min\{\beta_1 c_{1,0},\beta_2 c_{2,0}\}=\min\{c_{1,0},c_{2,0}\},\\
\alpha_1,\alpha_2\in\bd\K^*,\quad\beta_1,\beta_2\in\R_+\setminus\{0\}.
\end{gathered}
\end{equation}
\end{proposition}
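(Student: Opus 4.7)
The plan is to begin with an arbitrary representation of an undominated valid inequality $\mu^\top x \geq \mu_0$ via \eqref{eq:VLI}---which exists by conic duality under Assumption~\ref{As:A2}---and refine it in three stages until it meets the requirements of \eqref{eq:VLIReduced}.

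First, I would rule out $\beta_i=0$. If, say, $\beta_1=0$, then $\mu=\alpha_1\in\K^*$ and $\mu_0\leq 0$, so the trivially valid $0^\top x\geq 0$ witnesses $(\mu-0,\mu_0-0)\in\K^*\times-\R_+\setminus\{(0,0)\}$, contradicting undominatedness. Hence $\beta_1,\beta_2>0$. A byproduct of this argument is that $\mu\notin\K^*$ whenever $\mu_0\leq 0$, a fact I would reuse below.

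Second, for each $i$, I would exhibit a representation with $\alpha_i\in\bd\K^*$ by adjusting $\beta_i$ along the closed interval $I_i:=\{\beta\geq 0:\mu-\beta c_i\in\K^*\}=[a_i,b_i]$. An endpoint with $a_i>0$ or $b_i<\infty$ produces $\alpha_i':=\mu-\beta_i' c_i\in\bd\K^*$. When $c_{i,0}=+1$, Assumption~\ref{As:A1} yields $-c_i\notin\K^*$, hence $b_i<\infty$; then $\beta_i'=b_i\geq\beta_i$ preserves $\beta_i'c_{i,0}\geq\mu_0$. When $c_{i,0}=-1$, Assumption~\ref{As:A1} yields $c_i\notin\K^*$, and since $\mu_0\leq 0$ here forces $\mu\notin\K^*$ by the byproduct above, we get $a_i>0$; then $\beta_i'=a_i\leq\beta_i$ gives $\beta_i'c_{i,0}=-a_i\geq-\beta_i\geq\mu_0$. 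When $c_{i,0}=0$, the degenerate configuration $a_i=0$ and $b_i=\infty$ would force both $\mu\in\K^*$ and $-c_i\in\K^*$, producing $\mu^\top x\geq 0\geq\mu_0$ on $\K$, which is dominated; so at least one usable finite endpoint of $I_i$ is available.

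Finally, with $\alpha_i\in\bd\K^*$ and $\beta_i>0$, the inequality $\mu^\top x\geq\min\{\beta_1c_{1,0},\beta_2c_{2,0}\}$ is valid on $C_1\cup C_2$ because $\mu^\top x=\alpha_i^\top x+\beta_i c_i^\top x\geq\beta_ic_{i,0}$ on $C_i$; any strict inequality $\min>\mu_0$ would dominate $\mu^\top x\geq\mu_0$, so undominatedness forces $\min\{\beta_1c_{1,0},\beta_2c_{2,0}\}=\mu_0$. A short sign check over $(c_{1,0},c_{2,0})\in\{0,\pm 1\}^2$ shows $\mu_0$ and $\min\{c_{1,0},c_{2,0}\}$ always share the same sign (or both vanish), and a single positive rescaling delivers $\mu_0=\min\{c_{1,0},c_{2,0}\}$. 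The step I expect to be the main obstacle is the second one: securing $\alpha_i'\in\bd\K^*$, $\beta_i'>0$, and $\beta_i'c_{i,0}\geq\mu_0$ simultaneously requires picking the correct endpoint of $I_i$ according to the sign of $c_{i,0}$ and ruling out each degenerate sub-case via Assumption~\ref{As:A1} together with the byproduct that $\mu\in\K^*$ with $\mu_0\leq 0$ yields a dominated inequality.
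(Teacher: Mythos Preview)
Your argument is correct and takes a genuinely different route to the key step, namely forcing $\alpha_1,\alpha_2\in\bd\K^*$. The paper fixes the certificate $(\alpha_1,\alpha_2,\beta_1,\beta_2)$ and, when some $\alpha_i\in\intt\K^*$, \emph{changes $\mu$} to produce a strictly dominating inequality; this splits into the sub-cases $\alpha_1=0$ versus $\alpha_1\neq 0$ and, in the former, invokes Lemma~\ref{lem:inconsistent}. You instead \emph{keep $\mu$ fixed} and slide $\beta_i$ along the closed interval $I_i=\{\beta\geq 0:\mu-\beta c_i\in\K^*\}$ to an appropriate endpoint, selecting the upper endpoint when $c_{i,0}=+1$ (finite because $-c_i\notin\K^*$) and the lower endpoint when $c_{i,0}\in\{0,-1\}$ (positive because an undominated inequality with $\mu_0\leq 0$ cannot have $\mu\in\K^*$). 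The two constraints $\mu=\alpha_i+\beta_i c_i$ decouple once $\mu$ is fixed, so your independent adjustment of $\beta_1$ and $\beta_2$ is legitimate. What your approach buys is that Lemma~\ref{lem:inconsistent} is never needed in this proof and the inequality itself is never altered---you are simply exhibiting a better certificate for the \emph{same} $\mu$, which makes the ``up to positive scaling'' conclusion immediate. What the paper's approach buys is uniformity: it does not branch on the sign of $c_{i,0}$, and the same perturbation idea (subtract a bit of $\alpha_1$) reappears later in Lemmas~\ref{lem:DualCone} and~\ref{lem:tight}. The first and third stages (ruling out $\beta_i=0$, tightening $\mu_0$ to $\min\{\beta_1c_{1,0},\beta_2c_{2,0}\}$, and the final sign-and-scale step) are essentially identical to the paper's.
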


\begin{proof}
Let $\nu^\top x\geq\nu_0$ be a valid inequality for $\clconv(C_1\cup C_2)$. Then there exist $\alpha_1,\alpha_2,\beta_1,\beta_2$ such that $(\nu,\nu_0,\alpha_1,\alpha_2,\beta_1,\beta_2)$ satisfies \eqref{eq:VLI}. If $\beta_1=0$ or $\beta_2=0$, then $\nu^\top x\geq\nu_0$ is implied by the cone constraint $x\in\K$. If $\min\{\beta_1 c_{1,0},\beta_2 c_{2,0}\}>\nu_0$, then $\nu^\top x\geq\nu_0$ is implied by the valid inequality $\nu^\top x\geq\min\{\beta_1 c_{1,0},\beta_2 c_{2,0}\}$. Hence, we can assume without any loss of generality that any undominated valid linear inequality for $\clconv(C_1\cup C_2)$ has the form $\nu^\top x\geq\nu_0$ with $(\nu,\nu_0,\alpha_1,\allowbreak\alpha_2,\beta_1,\beta_2)$ satisfying
\begin{equation*}
\begin{gathered}
\nu=\alpha_1+\beta_1c_1,\\
\nu=\alpha_2+\beta_2c_2,\\
\min\{\beta_1 c_{1,0},\beta_2 c_{2,0}\}=\nu_0\\
\alpha_1,\alpha_2\in\K^*,\;\beta_1,\beta_2\in\R_+\setminus\{0\}.
\end{gathered}
\end{equation*}

We are now going to show that when $\alpha_1\in\intt\K^*$ or $\alpha_2\in\intt\K^*$, any such inequality is either dominated or equivalent to a valid inequality $\mu^\top x\geq\min\{c_{1,0},c_{2,0}\}$ that satisfies \eqref{eq:VLIReduced}. Assume without any loss of generality that $\alpha_2\in\intt\K^*$. There are two cases that we need to consider: $\alpha_1=0$ and $\alpha_1\neq 0$.

First suppose $\alpha_1=0$. We have $\alpha_2=\beta_1c_1-\beta_2c_2\in\intt\K^*$. By Lemma~\ref{lem:inconsistent} and taking $\beta_1,\beta_2>0$ into account, we conclude $\beta_2c_{2,0}<\beta_1c_{1,0}$. Hence, $\nu_0=\beta_2c_{2,0}$. If $\nu_0>0$, let $0<\epsilon'<\beta_1$ be such that $\alpha_2':=\alpha_2-\epsilon'c_1\in\K^*$ and $\beta_2c_{2,0}\leq\beta_1c_{1,0}-\epsilon'c_{1,0}$ and define $\beta_1':=\beta_1-\epsilon'$ and $\mu:=\nu-\epsilon'c_1$. If $\nu_0\leq 0$, let $\epsilon'>0$ be such that $\alpha_2':=\alpha_2+\epsilon'c_1\in\K^*$ and $\beta_2c_{2,0}\leq\beta_1c_{1,0}+\epsilon'c_{1,0}$ and define $\beta_1':=\beta_1+\epsilon'$ and $\mu:=\nu+\epsilon'c_1$. In either case, the inequality $\mu^\top x\geq\nu_0$ is valid for $\clconv(C_1\cup C_2)$ because $(\mu,\nu_0,\alpha_1,\alpha_2',\beta_1',\beta_2)$ satisfies \eqref{eq:VLI}. Furthermore, it dominates (or in the case of $\nu_0=0$, is equivalent to) $\nu^\top x\geq\nu_0$ because $\mu=\frac{\beta_1'}{\beta_1}\nu$ and $\beta_1'<\beta_1$ when $\nu_0>0$ and $\beta_1'>\beta_1$ when $\nu_0\leq 0$.

Now suppose $\alpha_1\neq 0$. Let $0<\epsilon''\leq 1$ be such that $\alpha_2'':=\alpha_2-\epsilon''\alpha_1\in\bd\K^*$, and define $\alpha_1'':=(1-\epsilon'')\alpha_1$ and $\mu:=\nu-\epsilon''\alpha_1$. The inequality $\mu^\top x\geq\nu_0$ is valid for $\clconv(C_1\cup C_2)$ because $(\mu,\nu_0,\alpha_1'',\alpha_2'',\beta_1,\beta_2)$ satisfies \eqref{eq:VLI}. Furthermore, $\mu^\top x\geq\nu_0$ dominates $\nu^\top x\geq\nu_0$ since $\nu-\mu=\epsilon''\alpha_1\in\K^*\setminus\{0\}$.

Finally, note that we can scale any valid inequality $\mu^\top x\geq\nu_0$ (along with the tuple $(\mu,\nu_0,\alpha_1,\alpha_2,\beta_1,\beta_2)$) so that $\nu_0\in\{0,\pm 1\}$. Using the fact that $\beta_1,\beta_2>0$ in an undominated valid inequality, we arrive at
\begin{align*}
\nu_0&=\sign(\nu_0)=\sign(\min\{\beta_1 c_{1,0},\beta_2 c_{2,0}\})\\
&=\min\{\underbrace{\sign(\beta_1 c_{1,0})}_{=\sign(c_{1,0})},\underbrace{\sign(\beta_2 c_{2,0})}_{=\sign(c_{2,0})}\}=\min\{c_{1,0},c_{2,0}\}.
\end{align*}
\end{proof}

\begin{remark}\label{rem:VLIReduced}
Under the assumptions of Proposition~\ref{prop:VLIReduced}, in an undominated valid linear inequality $\mu^\top x\geq\min\{c_{1,0},c_{2,0}\}$, we can assume that at least one of $\beta_1$ and $\beta_2$ is equal to $1$ in \eqref{eq:VLIReduced} without any loss of generality. In particular,
\begin{enumerate}[(i)]
\item if $c_{1,0}>c_{2,0}$, we can assume that $\beta_2=1$, $\beta_1c_{1,0}\geq c_{2,0}$, and $\beta_1c_1-c_2\notin\pm\intt\K^*$ holds,
\item if $c_{1,0}=c_{2,0}$, we can assume that either $\beta_2=1$, $\beta_1c_{1,0}\geq c_{2,0}$, and $\beta_1c_1-c_2\notin\pm\intt\K^*$ or
$\beta_1=1$, $\beta_2c_{2,0}\geq c_{1,0}$, and $\beta_2c_2-c_1\notin\pm\intt\K^*$ holds.
\end{enumerate}
\end{remark}

\begin{proof}
The remark follows from a careful look at the proof of Proposition~\ref{prop:VLIReduced}.

First suppose $c_{1,0}>c_{2,0}$. In this case the equality $\min\{\beta_1 c_{1,0},\beta_2 c_{2,0}\}=\min\{c_{1,0},c_{2,0}\}$ reduces to $\beta_2 c_{2,0}=c_{2,0}$ since $\sign(\beta_1 c_{1,0})=c_{1,0}>c_{2,0}=\sign(\beta_2 c_{2,0})$. This already implies $\beta_2=1$ when $c_{2,0}\in\{\pm 1\}$. When $c_{2,0}=0$, any undominated valid linear inequality has the form $\mu^\top x\geq 0$ and we can scale this inequality (along with the tuple $(\mu,\alpha_1,\alpha_2,\beta_1,\beta_2)$ that satisfies \eqref{eq:VLIReduced}) by a positive scalar to obtain an equivalent valid inequality with $\beta_2=1$. Therefore, when $c_{1,0}>c_{2,0}$, any undominated valid linear inequality for $\clconv(C_1\cup C_2)$ has the form $\mu^\top x\geq c_{2,0}$ with $(\mu,\alpha_1,\alpha_2,\beta)$ satisfying the system
\begin{equation*}
\begin{gathered}
\mu=\alpha_1+\beta c_1,\\
\mu=\alpha_2+c_2,\\
\alpha_1,\alpha_2\in\bd\K^*,\;\beta\in\R_+\setminus\{0\}.
\end{gathered}
\end{equation*}
In particular, this implies $c_2-\beta c_1\notin\intt\K^*$ since we must have $\alpha_1=\alpha_2+(c_2-\beta c_1)\in\bd\K^*$ and $c_2-\beta c_1\notin-\intt\K^*$ since we must have $\alpha_2=\alpha_1-(c_2-\beta c_1)\in\bd\K^*$.

Now suppose $c_{1,0}=c_{2,0}$. In this case the equality $\min\{\beta_1 c_{1,0},\beta_2 c_{2,0}\}=\min\{c_{1,0},c_{2,0}\}$ becomes $\min\{\beta_1 c_{1,0},\beta_2 c_{2,0}\}=c_{1,0}=c_{2,0}$. When $c_{1,0}=c_{2,0}\in\{\pm 1\}$, this implies either $\beta_1=1$ or $\beta_2=1$. Otherwise, any undominated valid linear inequality has the form $\mu^\top x\geq 0$ and we can again scale this inequality (along with the tuple $(\mu,\alpha_1,\alpha_2,\beta_1,\beta_2)$ that satisfies \eqref{eq:VLIReduced}) by a positive scalar to make, say, $\beta_2$ equal to 1. Therefore, when $c_{1,0}=c_{2,0}$, any undominated valid linear inequality for $\clconv(C_1\cup C_2)$ has the form $\mu^\top x\geq c_{1,0}=c_{2,0}$ with $(\mu,\alpha_1,\alpha_2,\beta)$ satisfying one of the following systems:
\begin{equation*}
\begin{aligned}[c]
(i)
\end{aligned}
\quad
\begin{aligned}[l]
&\mu=\alpha_1+\beta c_1,\\
&\mu=\alpha_2+c_2,\\
&\beta c_{1,0}\geq c_{1,0},\\
&\alpha_1,\alpha_2\in\bd\K^*,\;\beta\in\R_+\setminus\{0\},
\end{aligned}
\qquad
\begin{aligned}[c]
(ii)
\end{aligned}
\quad
\begin{aligned}[l]
&\mu=\alpha_1+c_1,\\
&\mu=\alpha_2+\beta c_2,\\
&\beta c_{2,0}\geq c_{2,0},\\
&\alpha_1,\alpha_2\in\bd\K^*,\;\beta\in\R_+\setminus\{0\}.
\end{aligned}
\end{equation*}
In case $(i)$ this implies $c_2-\beta c_1\notin\pm\intt\K^*$. In $(ii)$ this implies $c_1-\beta c_2\notin\pm\intt\K^*$.
\end{proof}

\section{Deriving the Disjunctive Cut}\label{sec:SecondOrder}

In this section we focus on the case where $\K$ is the second-order cone $\K_2^n:=\{x\in\R^n:\,\norm{\tilde{x}}_2\leq x_n\}$ and $\tilde{x}:=(x_1;\ldots;x_{n-1})$. Recall that the dual cone of $\K_2^n$ is again $\K_2^n$.

As in the previous section, we consider $C_1$ and $C_2$ defined as in \eqref{eq:DisjSet} with $c_{1,0},c_{2,0}\in\{0,\pm 1\}$ and suppose that Assumptions~\ref{As:A1} and \ref{As:A2} hold. We also assume without any loss of generality that $c_{1,0}\geq c_{2,0}$. Sets $C_1$ and $C_2$ that satisfy these conditions are said to satisfy the {\em basic disjunctive setup}. When in addition $\K=\K_2^n$, the sets $C_1$ and $C_2$ are said to satisfy the {\em second-order cone disjunctive setup}.

\subsection{A Convex Valid Inequality}\label{sec:sub:CutDerivation}

Proposition~\ref{prop:VLIReduced} gives a nice characterization of the form of undominated linear inequalities valid for $\clconv(C_1\cup C_2)$. In the following we use this characterization and show that, for a given pair $(\beta_1,\beta_2)$ satisfying the conditions of Remark~\ref{rem:VLIReduced}, one can group all of the corresponding linear inequalities into a single convex, possibly nonlinear, inequality valid for $\clconv(C_1\cup C_2)$. By Remark~\ref{rem:VLIReduced}, without any loss of generality, we focus on the case where $\beta_2=1$ and $\beta_1>0$ with $\beta_1c_{1,0}\geq c_{2,0}$ and $\beta_1c_1-c_2\notin\pm\intt\K_2^n$. Then by Lemma~\ref{lem:inconsistent}, $\beta_1c_1-c_2\notin-\K_2^n$. This leaves us two distinct cases to consider: $\beta_1c_1-c_2\in\bd\K_2^n$ and $\beta_1c_1-c_2\notin\pm\K_2^n$.

\begin{remark}\label{rem:linear}
Let $C_1, C_2$ satisfy the second-order cone disjunctive setup.
For any $\beta>0$ such that $\beta c_{1,0}\geq c_{2,0}$ and $\beta c_1-c_2\in\bd\K_2^n$, the inequality
\begin{equation}\label{eq:linear}
\beta c_1^\top x\geq c_{2,0}
\end{equation}
is valid for $\clconv(C_1\cup C_2)$ and dominates all valid linear inequalities that satisfy \eqref{eq:VLIReduced} with $\beta_1=\beta$ and $\beta_2=1$.
\end{remark}

\begin{proof}
The validity of \eqref{eq:linear} follows easily from $\beta c_{1,0}\geq c_{2,0}$ for $C_1$ and $\beta c_1-c_2\in\K_2^n$ for $C_2$. Let $\mu^\top x\geq c_{2,0}$ be a valid inequality  that satisfies \eqref{eq:VLIReduced} with $\beta_1=\beta$ and $\beta_2=1$. Then $\mu-\beta c_1=\alpha_1\in\K_2^n$, and since $\beta c_1^\top x\geq c_{2,0}$ is valid as well, we have that $\mu^\top x\geq c_{2,0}$ is dominated unless $\alpha_1=0$.
\end{proof}

\begin{theorem}\label{thm:main}
Let $C_1, C_2$ satisfy the second-order cone disjunctive setup.
For any $\beta>0$ such that $\beta c_{1,0}\geq c_{2,0}$ and $\beta c_1-c_2\notin\pm\K_2^n$, the inequality
\begin{equation}\label{eq:main}
2c_{2,0}-(\beta c_1+c_2)^\top x\leq\sqrt{\left((\beta c_1-c_2)^\top x\right)^2+\cN_1(\beta)\left(x_n^2-\norm{\tilde{x}}^2\right)}
\end{equation}
with
\begin{equation}\label{eq:N_beta}
\cN_1(\beta):=\norm{\beta\tilde{c_1}-\tilde{c_2}}_2^2-(\beta c_{1,n}-c_{2,n})^2
\end{equation}
is valid for $\clconv(C_1\cup C_2)$ and implies all valid linear inequalities that satisfy \eqref{eq:VLIReduced} with $\beta_1=\beta$ and $\beta_2=1$.
\end{theorem}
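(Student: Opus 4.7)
My plan is to realize \eqref{eq:main} as the envelope of the entire family of undominated linear inequalities identified in Proposition~\ref{prop:VLIReduced} and Remark~\ref{rem:VLIReduced} for the parameter values $\beta_1 = \beta,\;\beta_2 = 1$. By Remark~\ref{rem:VLIReduced}, every such inequality has the form $\mu^\top x\geq c_{2,0}$ with $\mu\in(\beta c_1 + \K_2^n)\cap(c_2 + \K_2^n)$. Setting
\[
p(x)\;:=\;\inf\{\mu^\top x:\;\mu-\beta c_1\in\K_2^n,\;\mu-c_2\in\K_2^n\},
\]
the \emph{simultaneous} validity of the family is captured by the single (generally nonlinear) condition $p(x)\geq c_{2,0}$: it holds on $\clconv(C_1\cup C_2)$ since every feasible $\mu$ gives a valid linear inequality by Proposition~\ref{prop:VLIReduced}, and it implies every linear inequality in the family by the definition of the infimum. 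Thus it suffices to prove the closed-form identity
\[
p(x)\;=\;\tfrac{1}{2}\Big[(\beta c_1+c_2)^\top x+\sqrt{((\beta c_1-c_2)^\top x)^2+\cN_1(\beta)\bigl(x_n^2-\norm{\tilde x}^2\bigr)}\Big]
\]
for every $x\in\K_2^n$, because rearranging $p(x)\geq c_{2,0}$ then yields exactly \eqref{eq:main}.

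To compute $p(x)$, I would invoke conic duality. Under Assumption~\ref{As:A2}, Slater's condition holds for the SOCP defining $p(x)$, and strong duality gives
\[
p(x)=\sup\{\beta c_1^\top y_1+c_2^\top y_2:\;y_1+y_2=x,\;y_1,y_2\in\K_2^n\}.
\]
The reparametrization $y_1=(x+w)/2$, $y_2=(x-w)/2$, which is a bijection onto $w\in\R^n$ subject to $x\pm w\in\K_2^n$, then yields
\[
p(x)\;=\;\tfrac{1}{2}(\beta c_1+c_2)^\top x+\tfrac{1}{2}\,r(x),\quad r(x):=\sup\{(\beta c_1-c_2)^\top w:\;x+w\in\K_2^n,\;x-w\in\K_2^n\},
\]
so the claim reduces to showing $r(x)=\sqrt{((\beta c_1-c_2)^\top x)^2+\cN_1(\beta)(x_n^2-\norm{\tilde x}^2)}$.

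To evaluate the box-type SOCP defining $r(x)$, I would rotate coordinates so that $\tilde x$ aligns with the first basis direction of $\R^{n-1}$ and use Cauchy--Schwarz to reduce the component of $\tilde w$ perpendicular to $\tilde x$ to its norm. What remains is a three-variable SOCP; the hypothesis $\beta c_1-c_2\notin\pm\K_2^n$ ensures $\cN_1(\beta)>0$, and a KKT analysis (combined with Lemma~\ref{lem:inconsistent} to rule out degenerate optima) shows that both SOC constraints are active at the maximizer, collapsing the KKT system to a quadratic that can be solved in closed form. Substituting back and using the identity
\[
(s\,d_\parallel+r\,d_n)^2+\norm{\tilde d_\perp}^2(s^2-r^2)\;=\;(r\,d_\parallel+s\,d_n)^2+\cN_1(\beta)(s^2-r^2),
\]
with $r=\norm{\tilde x}$, $s=x_n$, and $d_\parallel,\tilde d_\perp$ the components of $\tilde d:=\beta\tilde c_1-\tilde c_2$ parallel and perpendicular to $\tilde x$, produces precisely the target square root.

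The main obstacle is this last step: tracking that both SOC constraints must be tight at the optimum and matching the quadratic's optimal value to the square-root form involving $\cN_1(\beta)$. The degenerate cases $\tilde x=0$ (where the coordinate reduction trivializes) and $x\in\bd\K_2^n$ (where $x_n^2-\norm{\tilde x}^2=0$) can be handled either by direct elementary arguments or by appealing to continuity of both sides in $x$ once the generic interior case is settled.
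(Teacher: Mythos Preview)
Your approach is correct and follows the same high-level strategy as the paper---realize \eqref{eq:main} as the envelope of the linear family and evaluate the envelope via conic duality---but the execution differs in a meaningful way. The paper does \emph{not} keep both cone constraints $\mu-\beta c_1\in\K_2^n$ and $\mu-c_2\in\K_2^n$; instead it first uses the boundary conditions $\alpha_1,\alpha_2\in\bd\K_2^n$ to rewrite one of them as the linear equation $\tilde\mu^\top(\beta\tilde c_1-\tilde c_2)-\mu_n(\beta c_{1,n}-c_{2,n})=\cM/2$. After relaxing the remaining boundary condition to $\mu-\beta c_1\in\K_2^n$ and dualizing, only a \emph{single} scalar variable $\tau$ survives, and \eqref{eq:main} falls out from solving one quadratic in $\tau$. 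Your route keeps the symmetric two-cone primal and arrives at the ``lens'' problem $r(x)=\sup\{d^\top w:\,x\pm w\in\K_2^n\}$; the rotation plus Cauchy--Schwarz then reduces this to a one-parameter trigonometric maximization whose value $\sqrt{(s\,d_\parallel+r\,d_n)^2+\|\tilde d_\perp\|^2(s^2-r^2)}$ matches the target via exactly the identity you wrote. The paper's path is shorter (one fewer reduction step), while yours is more symmetric in $c_1,c_2$ and makes the geometric picture of the dual clearer. Two minor points: Slater's condition for your primal holds trivially (take $\mu_n$ large) and does not rely on Assumption~\ref{As:A2}; and Lemma~\ref{lem:inconsistent} is not needed to rule out degenerate KKT points---the hypothesis $\beta c_1-c_2\notin\pm\K_2^n$ alone forces both SOC constraints to be active at the optimum, since a single active constraint would place $d$ in $\pm\K_2^n$ via the normal-cone condition.
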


\begin{proof}
Consider the set of vectors $\mu\in\R^n$ satisfying \eqref{eq:VLIReduced} with $\beta_1=\beta$ and $\beta_2=1$:
\begin{equation*}
\capM(\beta,1)\!:=\left\{\mu\in\R^n\!:\,\exists\alpha_1,\alpha_2\in\bd\K_2^n~\text{ s.t. }~\mu=\alpha_1+\beta c_1=\alpha_2+c_2\right\}.
\end{equation*}
Because $\beta c_1-c_2\notin\pm\K_2^n$, Moreau's decomposition theorem implies that there exist $\mu^*,\alpha_1^*\neq 0,\alpha_2^*\neq 0$ such that $\alpha_1^*\perp\alpha_2^*$ and $(\mu^*,\alpha_1^*,\alpha_2^*,\beta,1)$ satisfies \eqref{eq:VLIReduced}. Hence, the set $\capM(\beta,1)$ is in fact nonempty. We can write
\begin{align*}
\capM(\beta,1)=&\left\{\mu\in\R^n:\|\tilde{\mu}-\beta\tilde{c}_1\|_2=\mu_n-\beta c_{1,n},\,
\|\tilde{\mu}-\tilde{c}_2\|_2=\mu_n-c_{2,n}\right\}\\
=&\left\{\mu\in\R^n:
\begin{array}{c}
\|\tilde{\mu}-\tilde{c}_2\|_2=\|\tilde{\mu}-\beta\tilde{c}_1\|_2+\beta c_{1,n}-c_{2,n},\\
\|\tilde{\mu}-\beta\tilde{c}_1\|_2=\mu_n-\beta c_{1,n}
\end{array}
\right\}.
\end{align*}
After taking the square of both sides of the first equation in $\capM(\beta,1)$, noting $\beta c_1-c_2\notin-\K_2^n$, and replacing the term $\norm{\tilde{\mu}-\beta\tilde{c}_1}_2$ with $\mu_n-\beta c_{1,n}$, we arrive at
\begin{equation*}
\capM(\beta,1)=\left\{\mu\in\R^n:\;
\begin{array}{c}
\tilde{\mu}^\top(\beta\tilde{c}_1-\tilde{c}_2)-\mu_n(\beta c_{1,n}-c_{2,n})=\frac{\cM}{2},\\
\norm{\tilde{\mu}-\beta\tilde{c}_1}_2=\mu_n-\beta c_{1,n}
\end{array}
\right\}
\end{equation*}
where $\cM:=\beta^2(\norm{\tilde{c}_1}_2^2-c_{1,n}^2)-(\norm{\tilde{c}_2}_2^2-c_{2,n}^2)$.

Note that $x\in\clconv(C_1\cup C_2)$ implies
\begin{align*}
&\Rightarrow x\in\K_2^n\text{ and }\mu^\top x\geq c_{2,0}\;~\forall\mu\in\capM(\beta,1).\\
&\Leftrightarrow x\in\K_2^n\text{ and }\inf_\mu\left\{\mu^\top x:\;\mu\in\capM(\beta,1)\right\}\geq c_{2,0}.
\end{align*}
Unfortunately, the optimization problem stated above is non-convex due to the second equality constraint in the description of $\capM(\beta,1)$.
We show below that the natural convex relaxation for this problem is tight. Indeed, consider the relaxation
\begin{equation*}
\inf_\mu\left\{\mu^\top x:\;
\begin{array}{c}
\tilde{\mu}^\top(\beta\tilde{c}_1-\tilde{c}_2)-\mu_n(\beta c_{1,n}-c_{2,n})=\frac{\cM}{2},\\
\norm{\tilde{\mu}-\beta\tilde{c}_1}_2\leq\mu_n-\beta c_{1,n}
\end{array}
\right\}
\end{equation*}
The feasible region of this relaxation is the intersection of a hyperplane with a closed, convex cone shifted by the vector $\beta c_1$. Any solution which is feasible to the relaxation but not the original problem can be expressed as a convex combination of solutions feasible to the original problem. Because we are optimizing a linear function, this shows that the relaxation is equivalent to the original problem. Thus, we have
\begin{multline*}
x\in\clconv(C_1\cup C_2)\Rightarrow\\
x\in\K_2^n\text{ and }
\inf_\mu\left\{\mu^\top x:\;
\begin{array}{c}
\tilde{\mu}^\top(\beta\tilde{c}_1-\tilde{c}_2)-\mu_n(\beta c_{1,n}-c_{2,n})=\frac{\cM}{2},\\
\norm{\tilde{\mu}-\beta\tilde{c}_1}_2\leq\mu_n-\beta c_{1,n}
\end{array}
\right\}
\end{multline*}
which is exactly the same as
\begin{multline}
x\in\clconv(C_1\cup C_2)\Rightarrow\\
x\in\K_2^n\text{ and }
\inf_\mu\left\{\mu^\top x:\;
\begin{array}{c}
\tilde{\mu}^\top(\beta\tilde{c}_1-\tilde{c}_2)-\mu_n(\beta c_{1,n}-c_{2,n})=\frac{\cM}{2},\\
\mu-\beta c_1\in\K_2^n
\end{array}
\right\}.\label{eq:convex}
\end{multline}
The minimization problem in the last line above is feasible since $\mu^*$, defined at the beginning of the proof, is a feasible solution. Indeed, it is strictly feasible since $\alpha_1^*+\alpha_2^*$ is a recession direction of the feasible region and belongs to $\intt\K_2^n$. Hence, its dual problem is solvable whenever it is feasible, strong duality applies, and we can replace the problem in the last line with its dual without any loss of generality.

Considering the definition of $\cN_1(\beta)=\norm{\beta\tilde{c_1}-\tilde{c_2}}_2^2-(\beta c_{1,n}-c_{2,n})^2$ and the assumption that $\beta c_1-c_2\notin\pm\K_2^n$, we get $\cN_1(\beta)>0$. Then
\begin{align*}
x&\in\clconv(C_1\cup C_2)\\
&\Rightarrow\!x\in\K_2^n\text{ and }
\max_{\rho,\tau}\!\left\{\beta c_1^\top\rho+\frac{\cM}{2}\tau:
\begin{array}{c}
\rho+\tau\left(\begin{array}{c}\beta\tilde{c}_1\!-\!\tilde{c}_2\\-\beta c_{1,n}+c_{2,n}\end{array}\right)=x,\\
\rho\in\K_2^n
\end{array}
\right\}\geq c_{2,0}.\\
&\Leftrightarrow\!x\in\K_2^n\text{ and }
\max_\tau\!\left\{\beta c_1^\top x-\frac{\cN_1(\beta)}{2}\tau:x+\tau\left(\begin{array}{c}\!-\!\beta\tilde{c}_1\!+\!\tilde{c}_2\\ \beta c_{1,n}-c_{2,n}\end{array}\right)\in\K_2^n\right\}\geq c_{2,0},\\
&\text{and since the optimum solution will be on the boundary of feasible region,}\\
&\Leftrightarrow\!x\in\K_2^n
\text{ and }
\min\{\tau_-,\tau_+\}\leq\frac{2(\beta c_1^\top x-c_{2,0})}{\cN_1(\beta)}\\
&\quad\text{ where }
\tau_{\pm}:=\frac{(\beta c_1-c_2)^\top x\pm\sqrt{((\beta c_1-c_2)^\top x)^2+\cN_1(\beta)(x_n^2-\norm{\tilde{x}}_2^2)}}{\cN_1(\beta)}.\\
&\Leftrightarrow\!x\in\K_2^n\text{ and }\tau_-\leq\frac{2(\beta c_1^\top x-c_{2,0})}{\cN_1(\beta)}.\\
&\Leftrightarrow\!x\in\K_2^n\text{ and }\cN_1(\beta)\tau_-\leq 2(\beta c_1^\top x-c_{2,0}).
\end{align*}
Rearranging the terms of the inequality in the last expression above yields \eqref{eq:main}.
\end{proof}

The next two observations follow directly from the proof of Theorem~\ref{thm:main}.

\begin{remark}\label{rem:convex}
Under the assumptions of Theorem~\ref{thm:main}, the set of points that satisfy \eqref{eq:main} in $\K_2^n$ is convex.
\end{remark}

\begin{proof}
The inequality~\eqref{eq:main} is equivalent to \eqref{eq:convex} by construction. The left-hand side of \eqref{eq:convex} is a concave function of $x$ written as the pointwise-infimum of linear functions, while the right-hand side is a constant.
\end{proof}

\begin{remark}
Inequality~\eqref{eq:main} reduces to the linear inequality \eqref{eq:linear} in $\K_2^n$ when $\beta c_1-c_2\in\bd\K_2^n$.
\end{remark}

\begin{proof}
When $\beta c_1-c_2\in\bd\K_2^n$, $\cN_1(\beta)=0$. Together with $x\in\K_2^n$, this also implies $(\beta c_1-c_2)^\top x\geq 0$, and hence, \eqref{eq:main} of Theorem~\ref{thm:main} becomes $2c_{2,0}-(\beta c_1+c_2)^\top x\leq(\beta c_1-c_2)^\top x$. This is equivalent to \eqref{eq:linear}.
\end{proof}

When $c_{1,0}>c_{2,0}$, by Proposition~\ref{prop:VLIReduced} and Remark~\ref{rem:VLIReduced}, the family of inequalities given in Remark~\ref{rem:linear} and Theorem~\ref{thm:main} is sufficient to describe $\clconv(C_1\cup C_2)$. On the other hand, when $c_{1,0}=c_{2,0}$, we also need to consider valid linear inequalities that satisfy \eqref{eq:VLIReduced} with $\beta_1=1$ and $\beta_2=\beta>0$ where $\beta c_{2,0}\geq c_{1,0}$ and $\beta c_2-c_1\notin\pm\intt\K_2^n$. Following Remark~\ref{rem:linear} and Theorem~\ref{thm:main} and letting $\cN_2(\beta):=\norm{\tilde{c_1}-\beta\tilde{c_2}}_2^2-(c_{1,n}-\beta c_{2,n})^2$, such linear inequalities can be summarized into the inequalities
\begin{align}
&\beta c_2^\top x\geq c_{2,0},\text{ and}\\
&2c_{2,0}-(c_1+\beta c_2)^\top x\leq\sqrt{\left((c_1-\beta c_2)^\top x\right)^2+\cN_2(\beta)\left(x_n^2-\norm{\tilde{x}}^2\right)}\label{eq:main2}
\end{align}
when $\beta c_2-c_1\in\bd\K_2^n$ and $\beta c_2-c_1\notin\pm\K_2^n$, respectively. In the remainder of this section, we continue to focus on the case where $\beta_2=1$ and $\beta_1=\beta>0$ with the understanding that our results are also applicable to the symmetric situation.

\subsection{A Conic Quadratic Form}\label{sec:sub:ConicQuadratic}

While having a convex valid inequality is nice in general, there are certain cases where \eqref{eq:main} can be expressed in conic quadratic form.

\begin{proposition}\label{prop:ConicForm}
Let $C_1, C_2$ satisfy the second-order cone disjunctive setup, and let $\beta>0$ be such that $\beta c_{1,0}\geq c_{2,0}$ and $\beta c_1-c_2\notin\pm\K_2^n$. Let $x\in\K_2^n$ be a point for which
\begin{equation}\label{eq:symmetry}
-2c_{2,0}+(\beta c_1+c_2)^\top x\leq
\sqrt{\left((\beta c_1-c_2)^\top x\right)^2+\cN_1(\beta)\left(x_n^2-\norm{\tilde{x}}^2\right)}
\end{equation}
holds with $\cN_1(\beta)$ defined as in \eqref{eq:N_beta}. Then $x$ satisfies \eqref{eq:main} if and only if it satisfies the conic quadratic inequality
\begin{equation}\label{eq:ConicForm}
\cN_1(\beta)x+2(c_2^\top x-c_{2,0})\left(\begin{array}{c}\beta\tilde{c}_1-\tilde{c}_2\\-\beta c_{1,n}+c_{2,n}\end{array}\right)\in\K_2^n.
\end{equation}
Furthermore, if \eqref{eq:symmetry} holds for all $x\in\clconv(C_1\cup C_2)$, then \eqref{eq:ConicForm} is valid for $\clconv(C_1\cup C_2)$ and implies \eqref{eq:main}.
\end{proposition}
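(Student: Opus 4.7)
The plan is to reduce \eqref{eq:main}, under the auxiliary hypothesis \eqref{eq:symmetry}, to a squared inequality, match the squared inequality with the norm condition in \eqref{eq:ConicForm} through a direct algebraic identity, and then upgrade the norm condition to genuine conic membership in $\K_2^n$ via a short continuity argument. Denote the two sides of \eqref{eq:main} by $L(x) := 2c_{2,0} - (\beta c_1 + c_2)^\top x$ and $R(x) := \sqrt{((\beta c_1-c_2)^\top x)^2 + \cN_1(\beta)(x_n^2 - \norm{\tilde{x}}_2^2)}$, so that \eqref{eq:main} reads $L(x) \leq R(x)$ while \eqref{eq:symmetry} reads $-L(x) \leq R(x)$. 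Since $R(x) \geq 0$ (from $x \in \K_2^n$ and $\cN_1(\beta) > 0$), the two inequalities hold simultaneously iff $|L(x)| \leq R(x)$ iff $L(x)^2 \leq R(x)^2$. In particular, under \eqref{eq:symmetry}, \eqref{eq:main} is equivalent to this squared inequality.

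The next step is to verify the key algebraic identity $y_n^2 - \norm{\tilde{y}}_2^2 = \cN_1(\beta)\bigl(R(x)^2 - L(x)^2\bigr)$, where $y$ is the vector appearing in \eqref{eq:ConicForm}. This is a routine expansion: the spatial and last components of the additional vector are $\beta\tilde{c}_1 - \tilde{c}_2$ and $-(\beta c_{1,n} - c_{2,n})$ respectively, the cross terms collapse into a multiple of $(c_2^\top x - c_{2,0})(\beta c_1 - c_2)^\top x$, and the pure-quadratic terms in $c_2^\top x - c_{2,0}$ combine using the definition \eqref{eq:N_beta}. Since $\cN_1(\beta) > 0$, this identity shows that the norm condition $\norm{\tilde{y}}_2^2 \leq y_n^2$ is equivalent to $L(x)^2 \leq R(x)^2$, i.e., equivalent to $y$ lying in the double cone $\K_2^n \cup (-\K_2^n)$.

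The main obstacle is promoting the norm condition to the full conic inclusion $y \in \K_2^n$, which additionally requires $y_n \geq 0$, something the squared identity alone cannot see. To get this for free I would reinterpret $y = \cN_1(\beta)(x - \tau^{**} v)$, where $v := (\beta\tilde{c}_1 - \tilde{c}_2;\,-\beta c_{1,n}+c_{2,n})$ and $\tau^{**} := -2(c_2^\top x - c_{2,0})/\cN_1(\beta)$, and analyze the univariate line $\tau \mapsto x - \tau v$. Exactly as in the proof of Theorem~\ref{thm:main}, the set of $\tau$ for which $\norm{\tilde{x} - \tau\tilde{v}}_2 \leq |x_n - \tau v_n|$ is a closed interval $[\tau_-, \tau_+]$, and Vieta's formulas give $\tau_-\tau_+ = -(x_n^2 - \norm{\tilde{x}}_2^2)/\cN_1(\beta) \leq 0$, so $0 \in [\tau_-, \tau_+]$. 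The $n$-th coordinate $x_n - \tau v_n$ is linear in $\tau$, equals $x_n \geq 0$ at $\tau = 0$, and cannot vanish inside $[\tau_-, \tau_+]$: a zero would force $x$ to be a scalar multiple of $v$, but $v \notin \pm\K_2^n$ since $\cN_1(\beta) > 0$, so $x \in \K_2^n$ together with $x = \tau_0 v$ forces $x = 0$, which is a trivial case. By continuity $y_n \geq 0$ throughout $[\tau_-, \tau_+]$, so $\tau^{**} \in [\tau_-, \tau_+]$ already delivers $y \in \K_2^n$.

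Assembling the three steps yields both parts of the proposition. For the pointwise equivalence, under \eqref{eq:symmetry} the chain $\eqref{eq:main} \Leftrightarrow L(x)^2 \leq R(x)^2 \Leftrightarrow y_n^2 \geq \norm{\tilde{y}}_2^2 \Leftrightarrow y \in \K_2^n$ closes. For the validity statement, Theorem~\ref{thm:main} already guarantees \eqref{eq:main} on $\clconv(C_1 \cup C_2)$, and combining it with the assumed validity of \eqref{eq:symmetry} on the same set yields \eqref{eq:ConicForm} everywhere on $\clconv(C_1 \cup C_2)$. Conversely, any $x \in \K_2^n$ satisfying \eqref{eq:ConicForm} has $y_n \geq \norm{\tilde{y}}_2 \geq 0$, hence $L(x)^2 \leq R(x)^2$ by the identity, hence $L(x) \leq |L(x)| \leq R(x)$, which is \eqref{eq:main}.
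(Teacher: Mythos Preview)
Your argument is correct and follows the same three–step skeleton as the paper: (i) use \eqref{eq:symmetry} to replace \eqref{eq:main} by the squared inequality $L(x)^2\leq R(x)^2$; (ii) establish the algebraic identity $y_n^2-\norm{\tilde y}_2^2=\cN_1(\beta)\bigl(R(x)^2-L(x)^2\bigr)$; (iii) upgrade $\norm{\tilde y}_2\leq|y_n|$ to $y\in\K_2^n$ by showing $y_n\geq 0$. Steps (i)--(ii) match the paper almost verbatim (the paper writes the identity as $\cA(x)^2\leq\cB(x)^2$ after the same completion of the square). The genuine difference is in step (iii). The paper proves directly, via the triangle inequality, that $\cA(u)+\cB(u)\leq 0$ forces $c_2^\top u=c_{2,0}$ and then $u=0$. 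You instead parametrize the line $\tau\mapsto x-\tau v$, observe that the set $\{\tau:\norm{\tilde x-\tau\tilde v}_2\leq|x_n-\tau v_n|\}$ is an interval containing $0$ (Vieta gives $\tau_-\tau_+\leq 0$), and argue by continuity that the last coordinate, which is $x_n\geq 0$ at $\tau=0$ and cannot vanish on the interval without forcing $x=0$, stays nonnegative throughout; hence $\tau^{**}$ in that interval gives $y_n\geq 0$. Both arguments are short; yours has the pleasant feature of reusing the parametric picture from the proof of Theorem~\ref{thm:main}, while the paper's triangle-inequality estimate is more self-contained and does not appeal to any earlier construction.
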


\begin{proof}
Let $x\in\K_2^n$ be a point for which \eqref{eq:symmetry} holds. Then $x$ satisfies \eqref{eq:main} if and only if it satisfies
\begin{equation*}
|2c_{2,0}-(\beta c_1+c_2)^\top x|\leq\sqrt{\left((\beta c_1-c_2)^\top x\right)^2+\cN_1(\beta)\left(x_n^2-\norm{\tilde{x}}_2^2\right)}.
\end{equation*}
We can take the square of both sides without any loss of generality and rewrite this inequality as
\begin{align*}
&\left(2c_{2,0}-(\beta c_1+c_2)^\top x\right)^2\leq\left((\beta c_1-c_2)^\top x\right)^2+\cN_1(\beta)\left(x_n^2-\norm{\tilde{x}}_2^2\right)\\
&\quad\Leftrightarrow 4(\beta c_1^\top x-c_{2,0})(c_2^\top x-c_{2,0})\leq \cN_1(\beta)\left(x_n^2-\norm{\tilde{x}}_2^2\right).
\end{align*}
Because $\beta c_1-c_2\notin\pm\K_2^n$, we have $\cN_1(\beta)>0$, and the above inequality is equivalent to
\begin{equation*}
0\leq \cN_1(\beta)^2\left(x_n^2-\norm{\tilde{x}}_2^2\right)-4\cN_1(\beta)(\beta c_1^\top x-c_{2,0})(c_2^\top x-c_{2,0}).
\end{equation*}
The right-hand side of this inequality is identical to
\begin{equation*}
\left(\cN_1(\beta)x_n-2(c_2^\top x\!-\!c_{2,0})(\beta c_{1,n}\!-\!c_{2,n})\right)^2-\norm{\cN_1(\beta)\tilde{x}+2(c_2^\top x\!-\!c_{2,0})(\beta\tilde{c}_1\!-\!\tilde{c}_2)}_2^2.
\end{equation*}
Therefore, we arrive at
\begin{equation*}
\norm{\cN_1(\beta)\tilde{x}+2(c_2^\top x\!-\!c_{2,0})(\beta\tilde{c}_1\!-\!\tilde{c}_2)}_2^2\leq\left(\cN_1(\beta)x_n-2(c_2^\top x\!-\!c_{2,0})(\beta c_{1,n}\!-\!c_{2,n})\right)^2.
\end{equation*}
Let
\begin{align*}
&\cA(x):=\norm{\cN_1(\beta)\tilde{x}+2(c_2^\top x-c_{2,0})(\beta\tilde{c}_1-\tilde{c}_2)}_2\text{ and }\\
&\cB(x):=\cN_1(\beta)x_n-2(c_2^\top x-c_{2,0})(\beta c_{1,n}-c_{2,n}).
\end{align*}
We have just proved that $x$ satisfies \eqref{eq:main} if and only if it satisfies $\cA(x)^2\leq \cB(x)^2$. In order to finish the proof, all we need to show is that $\cA(u)^2\leq \cB(u)^2$ is equivalent to $\cA(u)\leq \cB(u)$ for all $u\in\K_2^n$. It will be enough to show that either $\cA(u)+\cB(u)>0$ or $\cA(u)=\cB(u)=0$ holds for all $u\in\K_2^n$. Suppose $\cA(u)+\cB(u)\leq 0$ for some $u\in\K_2^n$. Using the triangle inequality, we can write
\begin{align*}
0&\geq \cA(u)+\cB(u)\\
&=\norm{\cN_1(\beta)\tilde{u}+2(c_2^\top u-c_{2,0})(\beta\tilde{c}_1-\tilde{c}_2)}_2\\
&\qquad+\cN_1(\beta)u_n-2(c_2^\top u-c_{2,0})(\beta c_{1,n}-c_{2,n})\\
&\geq-\cN_1(\beta)\norm{\tilde{u}}_2+2|c_2^\top u-c_{2,0}|\norm{\beta\tilde{c}_1-\tilde{c}_2}_2\\
&\qquad+\cN_1(\beta)u_n-2|c_2^\top u-c_{2,0}||\beta c_{1,n}-c_{2,n}|\\
&=\cN_1(\beta)(u_n-\norm{\tilde{u}}_2)+2|c_2^\top u-c_{2,0}|(\norm{\beta\tilde{c}_1-\tilde{c}_2}_2-|\beta c_{1,n}-c_{2,n}|).
\end{align*}
Because $u\in\K_2^n$ and $\beta c_1-c_2\notin\pm\K_2^n$, we have $u_n-\norm{\tilde{u}}_2\geq 0$ and $\norm{\beta\tilde{c}_1-\tilde{c}_2}_2-|\beta c_{1,n}-c_{2,n}|>0$. Hence, $c_2^\top u=c_{2,0}$. This implies $\cA(u)+\cB(u)=\cN_1(\beta)(u_n+\norm{\tilde{u}}_2)$ which is strictly positive unless $u=0$, but then $\cA(u)=\cB(u)=0$.

The second claim of the proposition follows immediately from the first under the hypothesis that \eqref{eq:symmetry} holds for all $x\in\clconv(C_1\cup C_2)$.
\end{proof}

We next give a sufficient condition, based on a property of the intersection of $C_1$ and $C_2$, under which \eqref{eq:symmetry} is satisfied by every point in $\K_2^n$. Note that this condition thus allows our convex inequality \eqref{eq:main} to be represented in an equivalent conic quadratic form \eqref{eq:ConicForm}.

\begin{proposition}\label{prop:CQR}
Let $C_1, C_2$ satisfy the second-order cone disjunctive setup.
Let $\beta>0$ be such that $\beta c_{1,0}\geq c_{2,0}$ and $\beta c_1-c_2\notin\pm\K_2^n$. Then \eqref{eq:symmetry} holds for all $x\in\K_2^n$ that satisfy $\beta c_1^\top x\leq c_{2,0}$ or $c_2^\top x\leq c_{2,0}$.
Furthermore, if
\begin{equation}\label{eq:CQRCondition}
\{x\in\K_2^n:\;\beta c_1^\top x>c_{2,0},c_2^\top x>c_{2,0}\}=\emptyset,
\end{equation}
then \eqref{eq:symmetry} holds for all $x\in\K_2^n$ and \eqref{eq:ConicForm} is equivalent to \eqref{eq:main}.
\end{proposition}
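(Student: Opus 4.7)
The plan is to exploit the algebraic identity
$$\bigl((\beta c_1 + c_2)^\top x - 2c_{2,0}\bigr)^2 - \bigl((\beta c_1 - c_2)^\top x\bigr)^2 = 4(\beta c_1^\top x - c_{2,0})(c_2^\top x - c_{2,0}),$$
which is the same factorization used in the proof of Proposition~\ref{prop:ConicForm}. Note that under the hypotheses $\cN_1(\beta)>0$, and for $x\in\K_2^n$ we have $x_n^2-\|\tilde{x}\|_2^2\geq 0$, so the expression under the square root in \eqref{eq:symmetry} is always nonnegative and the right-hand side of \eqref{eq:symmetry} is a well-defined nonnegative real number.

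For the first claim, I would split into two cases. Write $a := \beta c_1^\top x - c_{2,0}$ and $b := c_2^\top x - c_{2,0}$, so that the left-hand side of \eqref{eq:symmetry} equals $a+b$. If $a+b \leq 0$, the inequality \eqref{eq:symmetry} holds trivially since its right-hand side is nonnegative. Otherwise, \eqref{eq:symmetry} is equivalent (after squaring both sides, both of which are nonnegative) to
$$4ab \leq \cN_1(\beta)\bigl(x_n^2 - \|\tilde{x}\|_2^2\bigr),$$
by the identity above. Under the assumption that at least one of $a \leq 0$ or $b \leq 0$ holds, the sign analysis finishes the argument: if both $a,b \leq 0$ we are actually in the trivial case $a+b \leq 0$; and if exactly one of them is $\leq 0$ while the other is $>0$, then $ab \leq 0$, so the squared inequality holds because the right-hand side is nonnegative.

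For the second claim, the condition \eqref{eq:CQRCondition} says exactly that no $x\in\K_2^n$ can have both $a>0$ and $b>0$; equivalently, every $x\in\K_2^n$ satisfies $\beta c_1^\top x \leq c_{2,0}$ or $c_2^\top x \leq c_{2,0}$. Applying the first claim pointwise, \eqref{eq:symmetry} holds for all $x\in\K_2^n$. Proposition~\ref{prop:ConicForm} then yields that $x$ satisfies \eqref{eq:main} if and only if it satisfies \eqref{eq:ConicForm} for every $x\in\K_2^n$, which is the stated equivalence.

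There is no real obstacle here; the proof is essentially a one-identity sign analysis. The only mild subtlety is remembering that the squaring step requires both sides to be nonnegative, which is why the case split on the sign of $a+b$ is needed before invoking the identity. Everything else reduces to bookkeeping with the definitions of $a$, $b$, and $\cN_1(\beta)$.
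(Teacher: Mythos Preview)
Your proof is correct. It differs from the paper's argument, which is worth noting.

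The paper proves the first claim in one line by applying Theorem~\ref{thm:main} to the \emph{complementary} disjunction $-\beta c_1^\top u\geq -c_{2,0}\,\vee\,-c_2^\top u\geq -c_{2,0}$: since \eqref{eq:symmetry} is formally \eqref{eq:main} with $(\beta c_1,c_2,c_{2,0})$ replaced by $(-\beta c_1,-c_2,-c_{2,0})$, validity of \eqref{eq:symmetry} on the set $\{x\in\K_2^n:\beta c_1^\top x\leq c_{2,0}\text{ or }c_2^\top x\leq c_{2,0}\}$ drops out of that theorem. This is slick, but it tacitly asks the reader to check that the hypotheses of Theorem~\ref{thm:main} (in particular the disjunctive-setup assumptions) transfer to the new disjunction.

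Your route is the direct algebraic one: you recycle the factorization $((\beta c_1+c_2)^\top x-2c_{2,0})^2-((\beta c_1-c_2)^\top x)^2=4ab$ from the proof of Proposition~\ref{prop:ConicForm} and finish with a sign analysis on $a=\beta c_1^\top x-c_{2,0}$ and $b=c_2^\top x-c_{2,0}$. This is more elementary and fully self-contained; nothing beyond $\cN_1(\beta)>0$ and $x_n^2-\|\tilde x\|_2^2\geq 0$ is needed. The second claim is handled identically in both proofs, via Proposition~\ref{prop:ConicForm}.
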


\begin{proof}

Let $x\in\K_2^n$ satisfy $\beta c_1^\top x\leq c_{2,0}$ or $c_2^\top x\leq c_{2,0}$. Using Theorem~\ref{thm:main} on the disjunction $-\beta c_1^\top u\geq -c_{2,0}$ or $-c_2^\top u\geq -c_{2,0}$ shows that $x$ satisfies \eqref{eq:symmetry}. The second claim of the proposition now follows immediately from Proposition~\ref{prop:ConicForm} and \eqref{eq:CQRCondition}.
\end{proof}

Condition~\eqref{eq:CQRCondition} of Proposition~\ref{prop:CQR}, together with the results of Proposition~\ref{prop:ConicForm} and Theorem~\ref{thm:main}, identifies cases in which \eqref{eq:main} can be expressed in an equivalent conic quadratic form. In a split disjunction on the cone $\K_2^n$, it is easy to see using Lemma~\ref{lem:closure} that $C_1$ and $C_2$ are both nonempty and $\conv(C_1\cup C_2)\neq\K_2^n$ if and only if $c_1,c_2\notin\pm\K_2^n$ and $c_{1,0}=c_{2,0}=1$. For a proper two-sided split disjunction, $C_1\cap C_2=\emptyset$; hence, \eqref{eq:CQRCondition} is trivially satisfied with $\beta=1$.

\section{When does a Single Inequality Suffice?}\label{sec:SingleConvex}

In this section we give two conditions under which a single convex inequality of the type derived in Theorem~\ref{thm:main} describes $\clconv(C_1\cup C_2)$ completely, together with the cone constraint $x\in\K_2^n$. The main result of this section is Theorem~\ref{thm:SingleIneq} which we state below.

\begin{theorem}\label{thm:SingleIneq}
Let $C_1, C_2$ satisfy the second-order cone disjunctive setup with $c_1-c_2\notin\pm\K_2^n$. Then the inequality
\begin{equation}\label{eq:SingleIneq}
2c_{2,0}-(c_1+c_2)^\top x\leq\sqrt{\left((c_1-c_2)^\top x\right)^2+\cN\left(x_n^2-\norm{\tilde{x}}^2\right)}
\end{equation}
is valid for $\clconv(C_1\cup C_2)$ with $\cN:=\norm{\tilde{c_1}-\tilde{c_2}}_2^2-(c_{1,n}-c_{2,n})^2$. Furthermore,
\begin{equation*}
\clconv(C_1\cup C_2)=\{x\in\K_2^n:\;x\text{ satisfies }\eqref{eq:SingleIneq}\}
\end{equation*}
when, in addition,
\begin{enumerate}[(i)]
\item $c_1\in\K_2^n$, or $c_2\in\K_2^n$, or
\item $c_{1,0}=c_{2,0}\in\{\pm 1\}$ and undominated valid linear inequalities that are tight on both $C_1$ and $C_2$ are sufficient to describe $\clconv(C_1\cup C_2)$.
\end{enumerate}
\end{theorem}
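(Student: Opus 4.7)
The first step is validity, which is immediate from Theorem~\ref{thm:main}: \eqref{eq:SingleIneq} coincides with \eqref{eq:main} at $\beta=1$, and the three hypotheses needed (positivity of $\beta$, $\beta c_{1,0}\ge c_{2,0}$, and $\beta c_1-c_2\notin\pm\K_2^n$) are supplied respectively by the trivial observation, the basic-setup assumption $c_{1,0}\ge c_{2,0}$, and the standing hypothesis of the present theorem. For sufficiency, Proposition~\ref{prop:VLIReduced}, Remark~\ref{rem:VLIReduced}, Theorem~\ref{thm:main}, and Remark~\ref{rem:linear} jointly reduce matters to showing that every inequality \eqref{eq:main}/\eqref{eq:linear} (and its symmetric partner \eqref{eq:main2} when $c_{1,0}=c_{2,0}$) at an admissible $\beta\neq 1$ is implied by \eqref{eq:SingleIneq} together with $x\in\K_2^n$. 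By the dual characterization used in the proof of Theorem~\ref{thm:main}, this is equivalent to proving $\mu^\top x\ge c_{2,0}$ for every $\mu\in\capM(\beta,1)\cup\capM(1,\beta)$ and every $x\in\K_2^n$ satisfying \eqref{eq:SingleIneq}.

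Case (ii) is essentially bookkeeping: by strong conic duality applied to $\min\{\mu^\top x:x\in C_i\}$, the VLIs tight on both $C_1$ and $C_2$ are exactly those whose dual multipliers on the two disjunctive constraints both equal $1$---that is, exactly the elements of $\capM(1,1)$---which are aggregated by Theorem~\ref{thm:main} into \eqref{eq:SingleIneq}. For case (i), assume without loss of generality $c_1\in\K_2^n$; Assumption~\ref{As:A1} then forces $c_{1,0}=1$ and $\rec C_1=\K_2^n$. For $\mu=\alpha_1+\beta c_1=\alpha_2+c_2\in\capM(\beta,1)$ with $\beta\ge 1$, interpolate $\mu^{(t)}:=t\alpha_2+c_2$, $t\in[0,1]$: one has $\mu^{(t)}-c_2=t\alpha_2\in\bd\K_2^n$ throughout, while $\mu^{(t)}-c_1$ passes from $c_2-c_1\notin\K_2^n$ at $t=0$ to $\alpha_1+(\beta-1)c_1\in\K_2^n$ at $t=1$ (using $c_1\in\K_2^n$). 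Continuity then yields $t^*\in(0,1]$ with $\mu^{(t^*)}\in\capM(1,1)$, and the decomposition $\mu=\mu^{(t^*)}+(1-t^*)\alpha_2$ delivers $\mu^\top x\ge c_{2,0}+0$ via \eqref{eq:SingleIneq} and $x,\alpha_2\in\K_2^n$; the degenerate $\alpha_1=0$ of Remark~\ref{rem:linear} is covered identically. For $\beta<1$, Remark~\ref{rem:VLIReduced} forces $c_{2,0}\le 0$, and $\mu=\alpha_1+\beta c_1\in\K_2^n$ directly yields $\mu^\top x\ge 0\ge c_{2,0}$. When instead $c_2\in\K_2^n$, the fully symmetric argument dispatches $\capM(1,\beta)$.

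The main obstacle is the ``cross'' subcase of (i) with $c_{1,0}=c_{2,0}=1$, $c_1\in\K_2^n$, and $c_2\notin\pm\K_2^n$ (the sign $c_2\in-\K_2^n$ being ruled out by Assumption~\ref{As:A2}), where we must still handle $\capM(1,\beta)$ with $\beta\ge 1$. Here the interpolation above breaks down because $\mu-c_2=\alpha_2+(\beta-1)c_2$ need not lie in $\K_2^n$. I would close this case by a squared-form comparison: for $\mu=\alpha_1+c_1=\alpha_2+\beta c_2\in\K_2^n$ one has $\mu\ge_{\K_2^n}c_1$ and $\mu\ge_{\K_2^n}\beta c_2$, so $\mu^\top x\ge 1$ is immediate unless both $c_1^\top x<1$ and $\beta c_2^\top x<1$. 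In that regime both \eqref{eq:SingleIneq} and \eqref{eq:main2} at $\beta$ have strictly positive left-hand sides (since $c_1^\top x+c_2^\top x<1+1/\beta\le 2$ and $c_1^\top x+\beta c_2^\top x<2$), so squaring is reversible in both. A short calculation gives
\begin{equation*}
\cN_2(\beta)-\beta\cN=(\beta-1)\bigl[(c_{1,n}^2-\|\tilde c_1\|^2)+\beta(\|\tilde c_2\|^2-c_{2,n}^2)\bigr]\ge 0,
\end{equation*}
using $c_1\in\K_2^n$ (first bracket term nonnegative) and $c_2\notin\pm\K_2^n$ (second bracket term strictly positive). Writing $\beta c_2^\top x-1=\beta(c_2^\top x-1)+(\beta-1)$, multiplying the squared form of \eqref{eq:SingleIneq} by $\beta$, and adding the trivial inequality $4(\beta-1)(c_1^\top x-1)\le 0\le(\cN_2(\beta)-\beta\cN)(x_n^2-\|\tilde x\|^2)$ reassembles exactly the squared form of \eqref{eq:main2} at $\beta$ and hence, via Theorem~\ref{thm:main}, yields $\mu^\top x\ge 1$. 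This squared-form comparison is the most delicate piece of the proof; the wholly symmetric setting ($c_2\in\K_2^n$, $c_1\notin\pm\K_2^n$, $\capM(\beta,1)$) is dispatched by the identical computation with the roles of $c_1$ and $c_2$ swapped.
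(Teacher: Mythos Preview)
Your proof is correct, and for validity and case~(ii) it follows the paper essentially verbatim: validity is Theorem~\ref{thm:main} at $\beta=1$, and your one-line duality argument for case~(ii) is precisely the content of Lemma~\ref{lem:tight}.

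For case~(i), however, you take a genuinely different and much longer route than the paper. The paper simply invokes Lemma~\ref{lem:DualCone}, which shows---by a short domination argument---that whenever $c_1\in\K^*$ or $c_2\in\K^*$, every undominated valid linear inequality already admits a representation~\eqref{eq:VLIReduced} with $\beta_1=\beta_2=1$. The key observation there is purely structural: if, say, $c_1\in\K^*$ and $\beta_1>1$, one absorbs the excess $(\beta_1-1)c_1$ into $\alpha_1$ (and handles the other direction by a dual shrinking of $\alpha_2$), so both families $\capM(\beta,1)$ and $\capM(1,\beta)$ are disposed of at once and for arbitrary regular cones. Your interpolation $\mu^{(t)}=t\alpha_2+c_2$ is a valid alternative for one of the two families, but your ``cross'' subcase---where you compare the squared forms of \eqref{eq:SingleIneq} and \eqref{eq:main2} via the identity $\cN_2(\beta)-\beta\cN=(\beta-1)[(c_{1,n}^2-\|\tilde c_1\|^2)+\beta(\|\tilde c_2\|^2-c_{2,n}^2)]$ and the sign information $c_1^\top x<1$, $\beta c_2^\top x<1$---is a computation specific to $\K_2^n$ that the paper avoids entirely. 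Your argument is sound (in particular, the squaring is legitimate in the regime you isolate, and $c_1-\beta c_2\notin\pm\K_2^n$ follows from Lemma~\ref{lem:inconsistent} together with $c_2\notin\K_2^n$, so \eqref{eq:main2} is indeed available), but it is noticeably more delicate than the paper's, and it sacrifices the cone-independence that Lemma~\ref{lem:DualCone} enjoys.
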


The proof of Theorem~\ref{thm:SingleIneq} will require additional results on the structure of undominated valid linear inequalities. These are the subject of the next section.

\subsection{Further Properties of Undominated Valid Linear Inequalities}\label{sec:sub:VLIFurtherProperties}

In this section we consider the disjunction $c_1^\top x\geq c_{1,0}\vee c_2^\top x\geq c_{2,0}$ on a regular cone $\K$ and refine the results of Section~\ref{sec:sub:VLIProperties} on the structure of undominated valid linear inequalities. The results that we are going to present in this section hold for any regular cone $\K$.

The lemma below shows that the statement of Proposition~\ref{prop:VLIReduced} can be strengthened substantially when $c_1\in\K^*$ or $c_2\in\K^*$.

\begin{lemma}\label{lem:DualCone}
Let $C_1, C_2$ satisfy the basic disjunctive setup.
Suppose $c_1\in\K^*$ or $c_2\in\K^*$. Then, up to positive scaling, any undominated valid linear inequality for $\clconv(C_1\cup C_2)$ has the form $\mu^\top x\geq c_{2,0}$ where $\mu$ satisfies \eqref{eq:VLIReduced} with $\beta_1=\beta_2=1$.
\end{lemma}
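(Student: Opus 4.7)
The plan is to assume without loss of generality that $c_1 \in \K^*$ (the other case is symmetric). Assumption~\ref{As:A1} combined with $c_1 \in \K^*$ forces $c_{1,0} = 1$: otherwise $c_1^\top x \geq 0 \geq c_{1,0}$ on all of $\K$ yields $C_1 = \K$, and then $C_1 \supseteq C_2$ contradicts Assumption~\ref{As:A1}. Moreover, for $c_{2,0} \in \{0,-1\}$ the claim is vacuous: any VLI $\mu^\top x \geq c_{2,0}$ admits, via conic duality on $C_1$, a decomposition $\mu = \alpha_1 + \beta_1 c_1$ with $\alpha_1 \in \K^*$ and $\beta_1 > 0$, which forces $\mu \in \K^*$ and hence $\mu^\top x \geq 0 \geq c_{2,0}$ on all of $\K$, so such an inequality is dominated by the trivial $0^\top x \geq 0$, contradicting undomination. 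Hence I reduce to the main case $c_{1,0} = c_{2,0} = 1$.

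Let $\mu^\top x \geq 1$ be an undominated VLI and set $\alpha_1' := \mu - c_1$, $\alpha_2' := \mu - c_2$. Since $\mu \in \K^*$ by the argument above, and the validity of $\mu^\top x \geq 1$ on $C_i$ yields via conic duality $\lambda_i^{\max} := \max\{\lambda \geq 0 : \mu - \lambda c_i \in \K^*\} \geq 1$, the interval $\{\lambda \geq 0 : \mu - \lambda c_i \in \K^*\}$ (convex as the preimage of $\K^*$ under an affine map) contains $[0, \lambda_i^{\max}]$ and hence $\lambda = 1$. Thus $\alpha_i' \in \K^*$ for $i = 1, 2$. I also observe $\alpha_i' \neq 0$: if $\alpha_2' = 0$ then $\mu = c_2$ and $\alpha_1' = c_2 - c_1 \in \K^*$ would give, for each $x \in C_1$, $c_2^\top x = c_1^\top x + (c_2 - c_1)^\top x \geq 1$, i.e.\ $C_1 \subseteq C_2$, contradicting Assumption~\ref{As:A1}; the case $\alpha_1' = 0$ is analogous. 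It remains to upgrade $\alpha_i' \in \K^*$ to $\alpha_i' \in \bd\K^*$, yielding the required decomposition $\mu = \alpha_1' + c_1 = \alpha_2' + c_2$ satisfying \eqref{eq:VLIReduced} with $\beta_1 = \beta_2 = 1$.

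The main step, which I expect to be the bulk of the work, is the following perturbation argument, in the spirit of the construction in the proof of Proposition~\ref{prop:VLIReduced}. Suppose for contradiction that $\alpha_1' \in \intt\K^*$, and define $\mu'' := \mu - \epsilon \alpha_2'$ for a sufficiently small $\epsilon > 0$. The two decompositions
\begin{equation*}
\mu'' = (\alpha_1' - \epsilon \alpha_2') + c_1 = (1 - \epsilon) \alpha_2' + c_2
\end{equation*}
both have $\beta_1 = \beta_2 = 1$, and $\alpha_1' - \epsilon \alpha_2' \in \intt\K^* \subseteq \K^*$ (using $\alpha_1' \in \intt\K^*$ and $\epsilon$ small) together with $(1 - \epsilon) \alpha_2' \in \K^*$ show that $(\mu'')^\top x \geq 1$ is valid on $\clconv(C_1 \cup C_2)$. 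Since $\mu - \mu'' = \epsilon \alpha_2' \in \K^* \setminus \{0\}$, this strictly dominates $\mu^\top x \geq 1$, giving the desired contradiction. Hence $\alpha_1' \in \bd\K^*$. A symmetric perturbation $\nu := \mu - \epsilon \alpha_1'$, using $\alpha_1' \in \bd\K^* \setminus \{0\}$ from the previous step in place of $\alpha_2'$, yields $\alpha_2' \in \bd\K^*$ in exactly the same way, completing the proof.
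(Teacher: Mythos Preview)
Your proof is correct and takes a different route from the paper's. Both arguments first reduce to $c_{1,0}=c_{2,0}=1$: you show directly that any candidate inequality has $\mu\in\K^*$ and is therefore implied by the cone constraint when $c_{2,0}\le 0$, whereas the paper invokes Lemma~\ref{lem:closure} to conclude $\clconv(C_1\cup C_2)=\K$ in that case. In the main case the approaches diverge more substantially. The paper starts from a representation $(\nu,\alpha_1,\alpha_2,\beta_1,\beta_2)$ supplied by Proposition~\ref{prop:VLIReduced} with $\min\{\beta_1,\beta_2\}=1$, case-splits on whether $\beta_1>1$ or $\beta_2>1$, and in each branch constructs an explicit perturbation (exploiting that one $c_i$ lies in $\K^*$) that either strictly dominates the inequality or rewrites it with $\beta_1=\beta_2=1$. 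You instead construct the target representation $\mu=(\mu-c_1)+c_1=(\mu-c_2)+c_2$ directly: the key observation is that $\{\lambda\ge 0:\mu-\lambda c_i\in\K^*\}$ is a convex interval containing $0$ (since $\mu\in\K^*$) and some $\lambda_i^{\max}\ge 1$ (by strong duality on $C_i$), hence contains $1$, so $\mu-c_i\in\K^*$ for both $i$ simultaneously. Your approach is shorter and avoids the case analysis; the paper's has the advantage of working uniformly with the Proposition~\ref{prop:VLIReduced} data, a template it reuses in Proposition~\ref{prop:TightOnOne}. One minor remark: your ``the other case is symmetric'' is not a literal swap because the basic setup fixes $c_{1,0}\ge c_{2,0}$, but since either hypothesis forces $c_{1,0}=c_{2,0}=1$ (and $\mu\in\K^*$ then follows from whichever decomposition uses the $c_i\in\K^*$), the main argument is indeed symmetric once you reach that case.
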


\begin{proof}
First note that having $c_i\in\K^*$ implies $\rec C_i=\K$. Therefore, when $c_{2,0}\leq 0$, we can use Lemma~\ref{lem:closure} to conclude $\clconv(C_1\cup C_2)=\K$. In this case all valid inequalities for $\clconv(C_1\cup C_2)=\K$ are implied by the cone constraint $x\in\K$, and the claim holds trivially because there are no undominated valid inequalities. Thus, we only need to consider the situation in which $c_{1,0}=c_{2,0}=1$.

Assume without any loss of generality that $c_2\in\K^*$. Let $\nu^\top x\geq\nu_0$ be a valid inequality of the form given in Proposition~\eqref{prop:VLIReduced}. Then $\nu_0=\min\{c_{1,0},c_{2,0}\}=1$, and there exist $\alpha_1,\alpha_2,\beta_1,\beta_2$ such that $(\nu,\alpha_1,\alpha_2,\beta_1,\beta_2)$ satisfies \eqref{eq:VLIReduced}. In particular, $\nu=\alpha_1+\beta_1c_1=\alpha_2+\beta_2c_2\in\K^*$, and $\min\{\beta_1,\beta_2\}=1$. We are going to show that $\nu^\top x\geq 1$ is either dominated or has itself an equivalent representation \eqref{eq:VLIReduced} of the type claimed in the lemma. There are two cases that we need to consider: $\beta_1>\beta_2$ and $\beta_1<\beta_2$.

First suppose $\beta_1>\beta_2$. Then $\beta_2=1$ and $\alpha_1+\beta_1c_1=\alpha_2+c_2$. Having $\alpha_2=0$ contradicts Assumption~\ref{As:A1} through Lemma~\ref{lem:inconsistent}; therefore, $\alpha_2\neq 0$. Let $\epsilon'$ be such that $0<\epsilon'\leq\frac{\beta_1-1}{\beta_1}$, and define $\alpha_1':=(1-\epsilon')\alpha_1+\epsilon'c_2$, $\beta_1':=(1-\epsilon')\beta_1$, $\alpha_2':=(1-\epsilon')\alpha_2$ and $\mu:=\nu-\epsilon'\alpha_2$. The inequality $\mu^\top x\geq 1$ is valid for $\clconv(C_1\cup C_2)$ because $(\mu,1,\alpha_1',\alpha_2',\beta_1',1)$ satisfies \eqref{eq:VLI}. Furthermore, $\mu^\top x\geq 1$ dominates $\nu^\top x\geq 1$ since $\nu-\mu=\epsilon'\alpha_2\in\K^*\setminus\{0\}$.

Now suppose $\beta_2>\beta_1=1$. Observe that $(\nu,1,\alpha_1,\alpha_2+(\beta_2-1)c_2,1,1)$ is also a solution satisfying \eqref{eq:VLI}. If $\alpha_2+(\beta_2-1)c_2\in\intt\K^*$, we can find a valid inequality that dominates $\nu^\top x\geq 1$ as in the proof of Proposition~\ref{prop:VLIReduced}. Otherwise, $\alpha_2+(\beta_2-1)c_2\in\bd\K^*$ and $\nu^\top x\geq 1$ has the form claimed in the lemma since $(\nu,\alpha_1,\alpha_2+(\beta_2-1)c_2,1,1)$ satisfies \eqref{eq:VLIReduced}.
\end{proof}

When $c_{1,0}=c_{2,0}\in\{\pm 1\}$, a similar result holds for undominated valid linear inequalities that are tight on both $C_1$ and $C_2$.

\begin{lemma}\label{lem:tight}
Let $C_1, C_2$ satisfy the basic disjunctive setup with $c_{1,0}=c_{2,0}\in\{\pm 1\}$.
Then, up to positive scaling, any undominated valid linear inequality for $\clconv(C_1\cup C_2)$ that is tight on both $C_1$ and $C_2$ has the form $\mu^\top x\geq c_{2,0}$ where $\mu$ satisfies \eqref{eq:VLIReduced} with $\beta_1=\beta_2=1$.
\end{lemma}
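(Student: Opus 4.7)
The plan is to combine Proposition~\ref{prop:VLIReduced} and Remark~\ref{rem:VLIReduced}(ii) with conic strong duality applied to $C_1$. Let $\mu^\top x\geq\nu_0$ be an undominated valid linear inequality for $\clconv(C_1\cup C_2)$ tight on both $C_1$ and $C_2$. By Proposition~\ref{prop:VLIReduced}, after a positive rescaling we may assume $\nu_0=c_{2,0}=c_{1,0}$, and the inequality admits a decomposition $(\mu,\alpha_1,\alpha_2,\beta_1,\beta_2)$ satisfying \eqref{eq:VLIReduced}. By Remark~\ref{rem:VLIReduced}(ii) we may further assume without loss of generality that $\beta_2=1$ (the symmetric case $\beta_1=1$ is handled identically by exchanging the roles of $C_1$ and $C_2$). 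The task then reduces to showing $\beta_1=1$.

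For this, I would invoke the tightness of $\mu^\top x\geq c_{1,0}$ on $C_1$ through conic strong duality, which applies because $C_1$ is strictly feasible by Assumption~\ref{As:A2}. It yields
\begin{equation*}
c_{1,0}\;=\;\inf\{\mu^\top x:\,x\in C_1\}\;=\;\max\{\beta c_{1,0}:\,\mu-\beta c_1\in\K^*,\;\beta\geq 0\},
\end{equation*}
and the decomposition supplies a dual-feasible point $\beta=\beta_1>0$ since $\mu-\beta_1 c_1=\alpha_1\in\bd\K^*\subseteq\K^*$. Therefore $\beta_1 c_{1,0}\leq c_{1,0}$. On the other hand, \eqref{eq:VLIReduced} with $\beta_2=1$ and $c_{2,0}=c_{1,0}$ forces $\beta_1 c_{1,0}\geq c_{1,0}$. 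The two opposing bounds collapse to $\beta_1 c_{1,0}=c_{1,0}$, and since $c_{1,0}\in\{\pm 1\}$ this yields $\beta_1=1$, so $(\mu,\alpha_1,\alpha_2,1,1)$ satisfies \eqref{eq:VLIReduced}, as claimed.

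There is no substantive obstacle in this plan: the step deserving most care is matching the lower bound $\beta_1 c_{1,0}\geq c_{1,0}$ built into \eqref{eq:VLIReduced} with the upper bound $\beta_1 c_{1,0}\leq c_{1,0}$ delivered by dual feasibility, and the hypothesis $c_{1,0}=c_{2,0}\in\{\pm 1\}$ is precisely what collapses these opposing inequalities to $\beta_1=1$ regardless of the sign of $c_{1,0}$. The required boundary containments $\alpha_1,\alpha_2\in\bd\K^*$ come directly from Proposition~\ref{prop:VLIReduced}, so no additional argument is needed there.
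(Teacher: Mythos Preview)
Your argument is correct and follows essentially the same route as the paper: both use the dual feasibility of $(\alpha_i,\beta_i)$ together with tightness on $C_i$ to force $\beta_i c_{i,0}=c_{i,0}$, hence $\beta_i=1$. The only cosmetic difference is that you first invoke Remark~\ref{rem:VLIReduced}(ii) to fix $\beta_2=1$ and then apply duality on $C_1$, whereas the paper applies duality symmetrically to both $C_1$ and $C_2$ without the detour through the remark.
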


\begin{proof}
Let $\mu^\top x\geq\mu_0$ be an undominated valid inequality for $\clconv(C_1\cup C_2)$ that is tight on both $C_1$ and $C_2$. Using Proposition~\ref{prop:VLIReduced}, we can assume that $\mu_0=c_{1,0}=c_{2,0}$ and there exist $\alpha_1,\alpha_2,\beta_1,\beta_2$ such that $(\mu,\alpha_1,\alpha_2,\beta_1,\beta_2)$ satisfies \eqref{eq:VLIReduced}. In particular, $\min\{\beta_1\mu_0,\beta_2\mu_0\}=\mu_0$.

Now consider the following pair of minimization problems
\begin{equation*}
\inf_x\{\mu^\top x:\;x\in C_1\}\quad\text{and}\quad\inf_x\{\mu^\top x:\;x\in C_2\},
\end{equation*}
and their duals
\begin{align*}
&\sup_{\delta,\gamma}\{\delta\mu_0:\,\mu=\gamma+\delta c_1,\gamma\in\K^*,\delta\geq 0\}\text{ and}\\
&\sup_{\delta,\gamma}\{\delta\mu_0:\,\mu=\gamma+\delta c_2,\gamma\in\K^*,\delta\geq 0\}.
\end{align*}
The pairs $(\alpha_1,\beta_1)$ and $(\alpha_2,\beta_2)$ are feasible solutions to the first and second dual problems, respectively. Because $\mu^\top x\geq\mu_0$ is tight on both $C_1$ and $C_2$, we must have $\beta_1\mu_0\leq\mu_0=\min\{\beta_1\mu_0,\beta_2\mu_0\}$ and $\beta_2\mu_0\leq\mu_0=\min\{\beta_1\mu_0,\beta_2\mu_0\}$ by duality. This implies $\beta_1\mu_0=\beta_2\mu_0=\mu_0$ and $\beta_1=\beta_2=1$.
\end{proof}

\begin{proof}[of Theorem~\ref{thm:SingleIneq}]
The validity of \eqref{eq:SingleIneq} follows from Theorem~\ref{thm:main} by setting $\beta=1$. Lemmas~\ref{lem:DualCone} and \ref{lem:tight} show that we can limit ourselves to valid linear inequalities that satisfy \eqref{eq:VLIReduced} with $\beta_1=\beta_2=1$ to get a complete description of the closed convex hull. When this is the case, the implication in \eqref{eq:convex} in the proof of Theorem~\ref{thm:main} is actually an equivalence.
\end{proof}

\subsection{A Topological Connection: Closedness of the Convex Hull}

Next, we identify an important case where the family of tight inequalities specified in Lemma~\ref{lem:tight} is rich enough to describe $\clconv(C_1\cup C_2)$ completely. The key ingredient is the closedness of $\conv(C_1\cup C_2)$.

\begin{proposition}\label{prop:closedness}
Consider $C_1, C_2$ defined as in \eqref{eq:DisjSet} with $c_{1,0},c_{2,0}\in\{0,\pm 1\}$. Suppose Assumptions~\ref{As:A1} and \ref{As:A2} hold.
Suppose $\conv(C_1\cup C_2)$ is closed. Then undominated valid linear inequalities that are strongly tight on both $C_1$ and $C_2$ are sufficient to describe $\conv(C_1\cup C_2)$, together with the cone constraint $x\in\K$.
\end{proposition}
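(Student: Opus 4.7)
The plan is to prove the contrapositive. For any $\bar x \in \K \setminus \conv(C_1 \cup C_2)$, I will exhibit an undominated valid linear inequality that is strongly tight on both $C_1$ and $C_2$ and separates $\bar x$. Exploiting the hypothesis that $\conv(C_1 \cup C_2)$ is nonempty, closed, and convex, I take the Euclidean projection $x^* := \argmin_{y \in \conv(C_1 \cup C_2)} \norm{y - \bar x}$. Setting $\mu := x^* - \bar x \neq 0$ and $\mu_0 := \mu^\top x^*$ yields a valid linear inequality $\mu^\top y \geq \mu_0$ whose infimum over $\conv(C_1 \cup C_2)$ is attained at $x^*$, together with the strict separation $\mu^\top \bar x = \mu_0 - \norm{\mu}^2 < \mu_0$.

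Next, I show that both the separation property and tightness at any specific witness pass through replacement by a dominating inequality. If $(\nu, \nu_0)$ dominates $(\mu, \mu_0)$, so that $\alpha := \mu - \nu \in \K^*$ and $\mu_0 \leq \nu_0$, then $\bar x \in \K$ gives $\nu^\top \bar x = \mu^\top \bar x - \alpha^\top \bar x \leq \mu^\top \bar x < \mu_0 \leq \nu_0$, preserving separation. Moreover, for any $y \in C_i$ with $\mu^\top y = \mu_0$, the computation $\nu^\top y = \mu_0 - \alpha^\top y \leq \mu_0 \leq \nu_0$ combined with the validity bound $\nu^\top y \geq \nu_0$ forces $\mu_0 = \nu_0$, $\alpha^\top y = 0$, and $\nu^\top y = \nu_0$, so tightness at $y$ is preserved. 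Iterating, or invoking Proposition~1 of \cite{KK}, lets me assume the separator is the undominated inequality $\mu^\top x \geq c_{2,0}$ (WLOG $c_{1,0} \geq c_{2,0}$) of the form established in Proposition~\ref{prop:VLIReduced}, while retaining the tightness witnesses identified below.

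Finally, I decompose the projection as $x^* = \lambda y_1 + (1-\lambda) y_2$ with $y_i \in C_i$ and $\lambda \in [0,1]$. The identity $\lambda \mu^\top y_1 + (1-\lambda) \mu^\top y_2 = \mu^\top x^* = \mu_0$ coupled with $\mu^\top y_i \geq \mu_0$ forces $\mu^\top y_i = \mu_0$ whenever the corresponding weight is positive. In the generic case $\lambda \in (0,1)$, which includes $x^* \in C_1 \cap C_2$ through the trivial decomposition $y_1 = y_2 = x^*$ with $\lambda = 1/2$, both $y_1$ and $y_2$ witness strong tightness on their respective $C_i$, and the preservation argument produces the desired undominated strongly-tight-on-both separator. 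The main obstacle I anticipate is the degenerate case $x^* \in C_1 \setminus C_2$ (or symmetrically $C_2 \setminus C_1$) in which every decomposition forces $\lambda = 1$: here the supporting hyperplane at $x^*$ is strongly tight on $C_1$ but possibly not on $C_2$. To resolve this, I plan to use the closedness hypothesis more carefully, either (i) by moving along the segment from $x^*$ to $z := \argmin_{y \in C_2} \norm{y - \bar x}$, whose interior points each admit a nontrivial decomposition, and applying a continuity argument to extract a tight-on-both supporting hyperplane that still separates $\bar x$; or (ii) by a recession-cone contradiction showing that failure of attainment of $\inf_{y \in C_2} \mu^\top y = c_{2,0}$ would violate the closedness of $\conv(C_1 \cup C_2)$.
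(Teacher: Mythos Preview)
Your overall architecture---separate a point $\bar x\in\K\setminus\conv(C_1\cup C_2)$ by a supporting hyperplane at a boundary point of $\conv(C_1\cup C_2)$, then argue that this boundary point decomposes as a strict convex combination of a $C_1$-point and a $C_2$-point, forcing strong tightness on both sides---is exactly the paper's strategy. The preservation-under-domination step is also fine in spirit and matches what the paper uses implicitly.

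The genuine gap is precisely the degenerate case you flagged: your projection $x^*$ may land in $C_1\setminus C_2$ (or symmetrically), and then nothing forces a decomposition with $\lambda\in(0,1)$. Neither of your proposed fixes is convincing as stated. Route~(i) is vague: points on the segment from $x^*$ to $z$ need not lie on $\bd\conv(C_1\cup C_2)$, so you lose the supporting hyperplane that separates $\bar x$, and the ``continuity argument'' has no clear limit object. Route~(ii) is a non sequitur: closedness of $\conv(C_1\cup C_2)$ does not directly imply that every undominated valid inequality attains its infimum on $C_2$.

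The paper resolves this cleanly by \emph{choosing the boundary point so that it cannot lie in $C_1\cup C_2$}. Instead of projecting, pick $v\in\intt\conv(C_1\cup C_2)\setminus(C_1\cup C_2)$; such a $v$ exists by Lemma~\ref{lem:closure}(i), since otherwise $\intt\conv(C_1\cup C_2)\subseteq C_1\cup C_2$, hence (by closedness of $C_1\cup C_2$) $\conv(C_1\cup C_2)\subseteq C_1\cup C_2$, forcing $C_1\cup C_2=\K$. Then take $w$ to be the point on the segment $[\bar x,v]$ lying on $\bd\conv(C_1\cup C_2)$. Because $\K\setminus(C_1\cup C_2)=\{x\in\K:c_1^\top x<c_{1,0},\,c_2^\top x<c_{2,0}\}$ is convex and contains both $\bar x$ and $v$, it also contains $w$; thus $w\notin C_1\cup C_2$, and \emph{any} representation $w=\kappa x_1+(1-\kappa)x_2$ with $x_i\in C_i$ must have $0<\kappa<1$. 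An undominated supporting inequality at $w$ is then strongly tight at both $x_1$ and $x_2$, and it separates $\bar x$ because $\mu^\top v>\mu_0$ (since $v$ is interior) and $\bar x=\tfrac{1}{1-\lambda}(w-\lambda v)$. This single idea---anchor the segment at an interior point of the convex hull that is \emph{not} in the disjunctive set---replaces your entire degenerate-case analysis.
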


\begin{proof}
Suppose $\conv(C_1\cup C_2)$ is closed. When $\conv(C_1\cup C_2)=\K$, no new inequalities are needed for a description of $\conv(C_1\cup C_2)$ and the claim holds trivially. Therefore, assume $\conv(C_1\cup C_2)\subsetneq\K$. We prove that given $u\in\K\setminus\conv(C_1\cup C_2)$, there exists an undominated valid inequality that separates $u$ from $\conv(C_1\cup C_2)$ and is strongly tight on both $C_1$ and $C_2$.

Let $v\in\intt\conv(C_1\cup C_2)\setminus(C_1\cup C_2)$. Note that such a point exists since otherwise, we have $\intt\conv(C_1\cup C_2)\subseteq C_1\cup C_2$ which implies $\conv(C_1\cup C_2)\subseteq C_1\cup C_2$ through the closedness of $C_1\cup C_2$. By Lemma~\ref{lem:closure}, this is possible only if $C_1\cup C_2=\K$ which we have already ruled out. Let $0<\lambda<1$ be such that $w:=(1-\lambda)u+\lambda v\in\bd\conv(C_1\cup C_2)$. Then $w\in\K\setminus(C_1\cup C_2)$ by the convexity of $\K\setminus(C_1\cup C_2)=\{x\in\K:\,c_1^\top x<c_{1,0},c_2^\top x<c_{2,0}\}$. Because $w\in\conv(C_1\cup C_2)$, there exist $x_1\in C_1$, $x_2\in C_2$, and $0<\kappa<1$ such that $w=\kappa x_1+(1-\kappa)x_2$. Furthermore, the fact that $w\in\bd\conv(C_1\cup C_2)$ implies that there exists an undominated valid inequality $\mu^\top x\geq\mu_0$ for $\conv(C_1\cup C_2)$ such that $\mu^\top w=\mu_0$. Because $\mu^\top w=\kappa\mu^\top x_1+(1-\kappa)\mu^\top x_2=\mu_0$, $\mu^\top x_1\geq\mu_0$, and $\mu^\top x_2\geq\mu_0$, it must be the case that $\mu^\top x_1=\mu^\top x_2=\mu_0$. Thus, the inequality $\mu^\top x\geq\mu_0$ is strongly tight on both $C_1$ and $C_2$. The only thing that remains is to show that $\mu^\top x\geq\mu_0$ separates $u$ from $\conv(C_1\cup C_2)$. To see this, observe that $u=\frac{1}{1-\lambda}(w-\lambda v)$ and that $\mu^\top v>\mu_0$ since $v\in\intt\conv(C_1\cup C_2)$. Hence, we conclude
\begin{equation*}
\mu^\top u=\frac{1}{1-\lambda}(\mu^\top w-\lambda\mu^\top v)<\mu_0.
\end{equation*}
\end{proof}

Proposition~\ref{prop:closedness} demonstrates the close relationship between the closedness of $\conv(C_1\cup C_2)$ and the sufficiency of valid linear inequalities that are tight on both $C_1$ and $C_2$. This motivates us to investigate the cases where $\conv(C_1\cup C_2)$ is closed.

The set $\conv(C_1\cup C_2)$ is always closed when $c_{1,0}=c_{2,0}=0$ (see, e.g., Rockafellar \cite[Corollary 9.1.3]{R1970}) or when $C_1$ and $C_2$ are defined by a split disjunction (see Dadush et al. \cite[Lemma 2.3]{DDV2011}). In Proposition~\ref{prop:ClosedSuff} below, we generalize the result of Dadush et al.: We give a sufficient condition for $\conv(C_1\cup C_2)$ to be closed and show that this condition is almost necessary. In Corollary~\ref{cor:ClosedSuff}, we show that the sufficient condition of Proposition~\ref{prop:ClosedSuff} can be rewritten in a more specialized form using conic duality when the base set is the regular cone $\K$. The proofs of these results are left to the appendix.

\begin{proposition}\label{prop:ClosedSuff}
Let $S\subset\R^n$ be a closed, convex, pointed set, $S_1:=\{x\in S:c_1^\top x\geq c_{1,0}\}$, and $S_2:=\{x\in S:c_2^\top x\geq c_{2,0}\}$ for $c_1,c_2\in\R^n$ and $c_{1,0},c_{2,0}\in\R$. Suppose $S_1\not\subseteq S_2$ and $S_1\not\supseteq S_2$. If
\begin{equation}
\begin{aligned}\label{eq:ClosedSuff}
&\{r\in\rec S:\;c_2^\top r=0\}\subseteq\{r\in\rec S:\;c_1^\top r\geq 0\}\text{ and }\\
&\{r\in\rec S:\;c_1^\top r=0\}\subseteq\{r\in\rec S:\;c_2^\top r\geq 0\},
\end{aligned}
\end{equation}
then $\conv(S_1\cup S_2)$ is closed. Conversely, if
\begin{enumerate}[(i)]
\item there exists $r^*\in\rec S$ such that $c_1^\top r^*<0=c_2^\top r^*$ and the problem $\inf_x\{c_2^\top x:\,x\in S_1\}$ is solvable, or
\item there exists $r^*\in\rec S$ such that $c_2^\top r^*<0=c_1^\top r^*$ and the problem $\inf_x\{c_1^\top x:\,x\in S_2\}$ is solvable,
\end{enumerate}
then $\conv(S_1\cup S_2)$ is not closed.
\end{proposition}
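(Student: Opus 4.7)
The plan is to leverage Lemma~\ref{lem:closure}(ii), which gives $\clconv(S_1\cup S_2)=\conv(S_1^+\cup S_2^+)$ with $S_1^+=S_1+\rec S_2$ and $S_2^+=S_2+\rec S_1$. Closedness of $\conv(S_1\cup S_2)$ is equivalent to the inclusion $S_1^+\cup S_2^+\subseteq\conv(S_1\cup S_2)$: the reverse inclusion always holds, and convexity of $\conv(S_1\cup S_2)$ then forces $\conv(S_1^+\cup S_2^+)=\conv(S_1\cup S_2)=\clconv(S_1\cup S_2)$. Thus sufficiency reduces to verifying this inclusion, while necessity reduces to exhibiting a point of $S_1^+$ (or $S_2^+$) that cannot be written as a convex combination of elements of $S_1$ and $S_2$.

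For the sufficient direction, I would take an arbitrary $x_1+r$ with $x_1\in S_1$ and $r\in\rec S_2=\{r\in\rec S:\,c_2^\top r\geq 0\}$ and split into two cases. If $c_2^\top r>0$, then for sufficiently large $t>0$ we have $c_2^\top(x_1+tr)\geq c_{2,0}$ while $x_1+tr\in S$, so $x_1+tr\in S_2$, and $x_1+r$ is a convex combination of $x_1\in S_1$ and $x_1+tr\in S_2$. If instead $c_2^\top r=0$, hypothesis~\eqref{eq:ClosedSuff} gives $c_1^\top r\geq 0$, so $r\in\rec S_1$ and $x_1+r\in S_1$. Either way $x_1+r\in\conv(S_1\cup S_2)$, and a symmetric argument handles $S_2^+$.

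For necessity, assume case (i); case (ii) is analogous. Let $x_1^*$ attain the infimum of $c_2^\top x$ over $S_1$. Since $S_1\not\subseteq S_2$, there exists $x\in S_1$ with $c_2^\top x<c_{2,0}$, and optimality yields $c_2^\top x_1^*<c_{2,0}$. Because $r^*\in\rec S$ with $c_2^\top r^*=0$, we have $r^*\in\rec S_2$, so $x_1^*+tr^*\in S_1^+\subseteq\clconv(S_1\cup S_2)$ for all $t\geq 0$. Taking $t$ large enough that $c_1^\top(x_1^*+tr^*)=c_1^\top x_1^*+tc_1^\top r^*<c_{1,0}$ (possible since $c_1^\top r^*<0$) ensures $x_1^*+tr^*\notin S_1$, and since $c_2^\top(x_1^*+tr^*)=c_2^\top x_1^*<c_{2,0}$ also $x_1^*+tr^*\notin S_2$. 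Suppose for contradiction $x_1^*+tr^*=\lambda y_1+(1-\lambda)y_2$ with $y_1\in S_1$, $y_2\in S_2$, $\lambda\in[0,1]$; pairing with $c_2$ and using $c_2^\top y_1\geq c_2^\top x_1^*$ and $c_2^\top y_2\geq c_{2,0}$ gives $c_2^\top x_1^*\geq\lambda c_2^\top x_1^*+(1-\lambda)c_{2,0}$, forcing $\lambda=1$ and hence $x_1^*+tr^*=y_1\in S_1$, contradicting the choice of $t$.

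The main obstacle is organizing the necessary direction so that the optimality of $x_1^*$ is exploited in a single clean chain: solvability of $\inf\{c_2^\top x:x\in S_1\}$ together with $S_1\not\subseteq S_2$ is precisely what makes every hypothetical convex decomposition collapse to $\lambda=1$, and without one of these two ingredients the argument breaks. The sufficient direction is essentially mechanical once Lemma~\ref{lem:closure}(ii) is invoked; the only subtle point there is to separate the strict case $c_2^\top r>0$ (which needs convex combination with a far-out point in $S_2$) from the boundary case $c_2^\top r=0$ (which is exactly where hypothesis~\eqref{eq:ClosedSuff} is needed).
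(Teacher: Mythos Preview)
Your proposal is correct and follows essentially the same route as the paper: both directions invoke Lemma~\ref{lem:closure}(ii), the sufficient direction uses the identical two-case split on $c_2^\top r$ to show $S_1^+\cup S_2^+\subseteq\conv(S_1\cup S_2)$, and the converse constructs the same point $x_1^*+tr^*$ (the paper's $x^*+\delta r^*$) and rules out membership in $\conv(S_1\cup S_2)$ by the same inner-product estimate against $c_2$. Your write-up of the contradiction step is slightly more explicit than the paper's, but the arguments coincide.
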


\begin{corollary}\label{cor:ClosedSuff}
Consider $C_1, C_2$ defined as in \eqref{eq:DisjSet} with $c_{1,0},c_{2,0}\in\{0,\pm 1\}$. Suppose Assumptions~\ref{As:A1} and \ref{As:A2} hold.
If there exist $\beta_1,\beta_2\in\R$ such that $c_1-\beta_2c_2\in\K^*$ and $c_2-\beta_1c_1\in\K^*$, then $\conv(C_1\cup C_2)$ is closed.
\end{corollary}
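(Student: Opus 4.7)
The plan is to apply Proposition~\ref{prop:ClosedSuff} directly with $S = \K$, so that $\rec S = \K$ (since $\K$ is a cone) and $S_1 = C_1$, $S_2 = C_2$. It then suffices to verify the sufficient condition \eqref{eq:ClosedSuff}, namely that
\begin{equation*}
\{r\in\K:\;c_2^\top r=0\}\subseteq\{r\in\K:\;c_1^\top r\geq 0\}
\end{equation*}
together with the symmetric inclusion obtained by swapping the roles of $c_1$ and $c_2$.

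The key step is a one-line conic duality argument. Take any $r \in \K$ with $c_2^\top r = 0$. The hypothesis $c_1 - \beta_2 c_2 \in \K^*$, combined with $r \in \K$ and the definition of the dual cone, yields $(c_1 - \beta_2 c_2)^\top r \geq 0$, which simplifies to $c_1^\top r \geq \beta_2 \, c_2^\top r = 0$. This establishes the first inclusion in \eqref{eq:ClosedSuff}. Symmetrically, if $r \in \K$ satisfies $c_1^\top r = 0$, then using $c_2 - \beta_1 c_1 \in \K^*$ we obtain $c_2^\top r \geq \beta_1 \, c_1^\top r = 0$, giving the second inclusion.

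Once both inclusions are verified, Proposition~\ref{prop:ClosedSuff} immediately yields that $\conv(C_1 \cup C_2)$ is closed, completing the proof. I do not anticipate any genuine obstacle here: the scalars $\beta_1, \beta_2$ are allowed to be arbitrary reals (possibly negative or zero), and the argument above does not rely on their sign because the right-hand sides $\beta_2 \, c_2^\top r$ and $\beta_1 \, c_1^\top r$ already vanish. Assumptions~\ref{As:A1} and \ref{As:A2} (and the normalization $c_{i,0}\in\{0,\pm 1\}$) enter only through the hypothesis $S_1 \not\subseteq S_2$ and $S_1 \not\supseteq S_2$ required by Proposition~\ref{prop:ClosedSuff}, which is precisely Assumption~\ref{As:A1}.
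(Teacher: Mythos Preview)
Your proposal is correct and follows essentially the same route as the paper: verify condition~\eqref{eq:ClosedSuff} of Proposition~\ref{prop:ClosedSuff} with $S=\K$. The only cosmetic difference is that the paper casts the key step as a weak-duality statement for the pair $\inf_u\{c_1^\top u:\,c_2^\top u=0,\,u\in\K\}$ and $\sup_\delta\{0:\,c_1-\delta c_2\in\K^*\}$, whereas you unpack the same inequality directly from the definition of $\K^*$; the content is identical.
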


Let us define the following sets for ease of reference:
\begin{equation}
\begin{aligned}\label{eq:BetaSets}
&\capD_1:=\capD_1(c_1,c_2)=\{\beta_1\in\R:\;c_2-\beta_1c_1\in\K^*\},\\
&\capD_2:=\capD_2(c_1,c_2)=\{\beta_2\in\R:\;c_1-\beta_2c_2\in\K^*\}.
\end{aligned}
\end{equation}

Theorem~\ref{thm:SingleIneq}, Proposition~\ref{prop:closedness}, and Corollary~\ref{cor:ClosedSuff} imply that \eqref{eq:SingleIneq} is sufficient to describe $\conv(C_1\cup C_2)$ when $\capD_1$ and $\capD_2$ are both nonempty and $c_{1,0}=c_{2,0}\in\{\pm 1\}$. Nevertheless, it is easy to construct instances where $\capD_1$ or $\capD_2$ is empty. We explore these cases further in Section~\ref{sec:MultipleIneqs}.

Consider the case of $c_{1,0}=c_{2,0}\in\{0,\pm 1\}$. Then by Lemma~\ref{lem:inconsistent}, $c_1-c_2\notin\K_2^n$. Suppose also that
\begin{enumerate}[(a)]
\item condition (i) or (ii) of Theorem~\ref{thm:SingleIneq} is satisfied, and
\item $\{x\in\K_2^n:\,c_1^\top x>c_{1,0},c_2^\top x>c_{2,0}\}=\emptyset$.
\end{enumerate}
We note that statement~(a) holds, for instance, in the case of split disjunctions because $c_{1,0}=c_{2,0}=1$ and $\conv(C_1\cup C_2)$ is closed by Corollary~\ref{cor:ClosedSuff}. Moreover, statement~(b) simply means that the two sets $C_1$ and $C_2$ defined by the disjunction do not meet except, possibly, at their boundaries. This also holds for split disjunctions. Then by Theorem~\ref{thm:SingleIneq}, $\clconv(C_1\cup C_2)$ is completely described by \eqref{eq:SingleIneq} together with the cone constraint $x\in\K_2^n$. Furthermore, by Proposition~\ref{prop:CQR}, \eqref{eq:symmetry} is satisfied by every point in $\K_2^n$ with $\beta=1$ and by statement~(b), we have that \eqref{eq:SingleIneq} can be expressed in an equivalent conic quadratic form \eqref{eq:ConicForm}. Therefore, we conclude
\begin{equation*}
\clconv(C_1\cup C_2)=\left\{x\in\K_2^n:\;\cN x+2(c_2^\top x-c_{2,0})\left(\begin{array}{c}\tilde{c}_1-\tilde{c}_2\\- c_{1,n}+c_{2,n}\end{array}\right)\in\K_2^n\right\}
\end{equation*}
where $\cN:=\norm{\tilde{c_1}-\tilde{c_2}}_2^2-(c_{1,n}-c_{2,n})^2$. Thus, Theorem~\ref{thm:SingleIneq} and Proposition~\ref{prop:ConicForm}, together with Proposition~\ref{prop:CQR}, cover the results of \cite{MKV} and \cite{AJ2013} on split disjunctions on the cone $\K_2^n$ and significantly extend these results to more general two-term disjunctions.

\subsection{Example where a Single Inequality Suffices}\label{sec:sub:TightExample}

Consider the cone $\K_2^3$ and the disjunction $x_3\geq 1\,\vee\,x_1+x_3\geq 1$. Note that $c_1=e^3\in\K_2^3$ in this example. Hence, we can use Theorem~\ref{thm:SingleIneq} to characterize the closed convex hull:
\begin{equation*}
\clconv(C_1\cup C_2)=\left\{x\in\K_2^3:\;2-(x_1+2x_3)\leq\sqrt{x_3^2-x_2^2}\right\}.
\end{equation*}
Figures~\ref{fig:T}(a) and (b) depict the disjunctive set $C_1\cup C_2$ and the associated closed convex hull, respectively. In order to give a better sense of the convexification operation, we plot the points added to $C_1\cup C_2$ to generate the closed convex hull in Figure~\ref{fig:T}(c). We note that in this example the condition on the disjointness of the interiors of $C_1$ and $C_2$ that was required in Proposition~\ref{prop:CQR} is violated. Nevertheless, the inequality that we provide is still intrinsically related to the conic quadratic inequality \eqref{eq:ConicForm} of Proposition~\ref{prop:ConicForm}: The sets described by the two inequalities coincide in the region $\clconv(C_1\cup C_2)\setminus(C_1\cap C_2)$ as a consequence of Proposition~\ref{prop:CQR}. We display the corresponding cone for this example in Figure~\ref{fig:T}(d). Moreover, the resulting conic quadratic inequality is in fact not valid for some points in $\conv(C_1\cup C_2)$, which can be seen by contrasting Figures~\ref{fig:T}(c) and \ref{fig:T}(d).
\begin{figure}[h]
 \begin{center}
 \subfigure[$C_1\cup C_2$]{
 \includegraphics[scale=0.325]{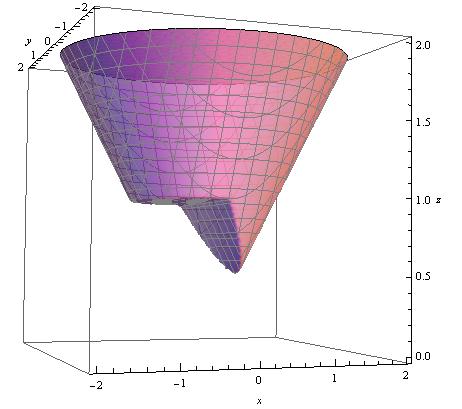}
 }
 \quad
 \subfigure[$\clconv(C_1\cup C_2)$]{
 \includegraphics[scale=0.32]{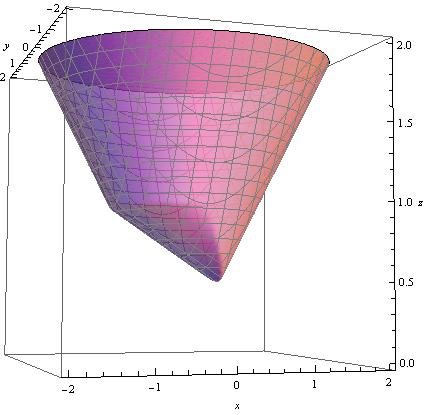}
 }
 \\
 \subfigure[$\clconv(C_1\cup C_2)\setminus(C_1\cup C_2)$]{
 \includegraphics[scale=0.305]{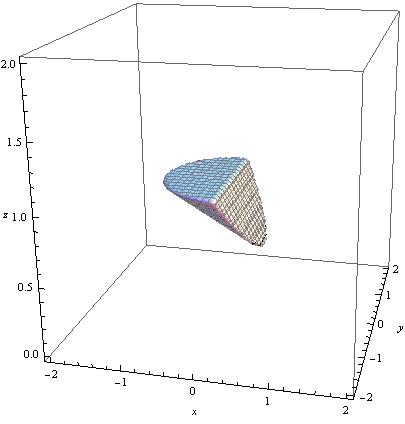}
 }
 \quad
 \subfigure[Underlying cone generating the convex inequality]{
 \includegraphics[scale=0.355]{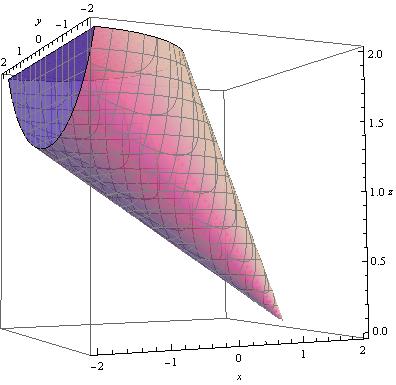}
 }
 \end{center}
 \caption{Sets associated with the disjunction $x_3\geq 1\,\vee\,x_1+x_3\geq 1$ on $\K_2^3$.}
 \label{fig:T}
\end{figure}

\section{When are Multiple Convex Inequalities Needed?}\label{sec:MultipleIneqs}

Lemma~\ref{lem:tight} allows us to simplify the characterization \eqref{eq:VLIReduced} of undominated valid linear inequalities which are tight on both $C_1$ and $C_2$ in the case $c_{1,0}=c_{2,0}\in\{\pm 1\}$. The next proposition shows the necessity of this assumption on $c_{1,0}$ and $c_{2,0}$.
Unfortunately, when $c_{1,0}\neq c_{2,0}$, undominated valid linear inequalities are tight on exactly one of the two sets $C_1$ and $C_2$. The proof of this result is left to the appendix.

\begin{proposition}\label{prop:nontight}
Let $C_1, C_2$ satisfy the basic disjunctive setup. If $c_{1,0}>c_{2,0}$, then every undominated valid linear inequality for $\clconv(C_1\cup C_2)$ is tight on $C_2$ but not on $C_1$.
\end{proposition}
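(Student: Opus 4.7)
The plan is to leverage the explicit characterization of undominated valid linear inequalities from Remark~\ref{rem:VLIReduced}(i). Since $c_{1,0}>c_{2,0}$, every such inequality can be written, up to positive scaling, as $\mu^\top x\geq c_{2,0}$ with $\mu=\alpha_1+\beta_1 c_1=\alpha_2+c_2$, where $\alpha_1,\alpha_2\in\bd\K^*$, $\beta_1>0$, and $\beta_1 c_{1,0}\geq c_{2,0}$.

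My first step is to rule out tightness on $C_1$ directly from the representation of $\mu$. For every $x\in C_1$, combining $\alpha_1\in\K^*$, $x\in\K$, $c_1^\top x\geq c_{1,0}$, and $\beta_1>0$ yields
\begin{equation*}
\mu^\top x=\alpha_1^\top x+\beta_1 c_1^\top x\geq\beta_1 c_{1,0},
\end{equation*}
so $\inf_{x\in C_1}\mu^\top x\geq\beta_1 c_{1,0}$. Since $c_{1,0},c_{2,0}\in\{0,\pm 1\}$ with $c_{1,0}>c_{2,0}$, the admissible cases are $(c_{1,0},c_{2,0})\in\{(1,0),(1,-1),(0,-1)\}$, and in each of them $\beta_1 c_{1,0}>c_{2,0}$ strictly: when $c_{1,0}=1$, $\beta_1 c_{1,0}=\beta_1>0\geq c_{2,0}$; when $c_{1,0}=0$, necessarily $c_{2,0}=-1$ and $\beta_1 c_{1,0}=0>-1$. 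Hence $\inf_{x\in C_1}\mu^\top x>c_{2,0}$, so $\mu^\top x\geq c_{2,0}$ is not tight on $C_1$.

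For tightness on $C_2$, I would invoke the fact, noted in Section~\ref{sec:sub:VLIProperties}, that an undominated valid linear inequality for $\clconv(C_1\cup C_2)$ is tight on $\clconv(C_1\cup C_2)$. Since $\mu^\top x$ is linear and continuous, the infimum over $\clconv(C_1\cup C_2)$ equals the infimum over $C_1\cup C_2$, which in turn is $\min\{\inf_{x\in C_1}\mu^\top x,\inf_{x\in C_2}\mu^\top x\}$. Tightness on the closed convex hull therefore gives
\begin{equation*}
\min\left\{\inf_{x\in C_1}\mu^\top x,\;\inf_{x\in C_2}\mu^\top x\right\}=c_{2,0}.
\end{equation*}
Combined with $\inf_{x\in C_1}\mu^\top x>c_{2,0}$ from the previous step and $\inf_{x\in C_2}\mu^\top x\geq c_{2,0}$ by validity on $C_2$, this forces $\inf_{x\in C_2}\mu^\top x=c_{2,0}$, which is exactly tightness on $C_2$.

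The only delicate point is the short case analysis establishing $\beta_1 c_{1,0}>c_{2,0}$ strictly; once this strict bound is in hand, the remainder is bookkeeping that chains together the definitions of tightness and undominatedness with the compatibility of infima of linear functions with the convex-hull and closure operations.
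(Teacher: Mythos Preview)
Your proof is correct and follows the same overall structure as the paper's: show that the inequality cannot be tight on $C_1$, then use the fact that any undominated inequality is tight on $\clconv(C_1\cup C_2)$ (hence on $C_1\cup C_2$) to force tightness on $C_2$.

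The only real difference is in how non-tightness on $C_1$ is established. The paper sets up the conic dual of $\inf\{\mu^\top x:\,x\in C_1\}$, invokes strong duality (via Assumption~\ref{As:A2}) to conclude that the infimum equals $\delta^* c_{1,0}$ for some optimal $\delta^*\geq\beta_1>0$, and then observes $\sign(\delta^* c_{1,0})=c_{1,0}>c_{2,0}$. You bypass strong duality entirely: the representation $\mu=\alpha_1+\beta_1 c_1$ with $\alpha_1\in\K^*$ already yields the weak-duality lower bound $\mu^\top x\geq\beta_1 c_{1,0}$ for $x\in C_1$, and your short case analysis on $(c_{1,0},c_{2,0})\in\{(1,0),(1,-1),(0,-1)\}$ shows $\beta_1 c_{1,0}>c_{2,0}$ directly. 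Your route is more elementary and does not need strict feasibility of $C_1$; the paper's route makes the primal--dual pair explicit, which is consistent with how the same pair is exploited elsewhere (e.g., in the proof of Lemma~\ref{lem:tight}).
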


This result, when combined with Proposition~\ref{prop:closedness}, yields the following corollary.

\begin{corollary}
Let $C_1, C_2$ satisfy the basic disjunctive setup with $c_{1,0}>c_{2,0}$. If $\clconv(C_1\cup C_2)\neq\K$, then $\conv(C_1\cup C_2)$ is not closed.
\end{corollary}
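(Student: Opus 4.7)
The plan is to argue by contradiction using the two preceding results. Assume, for the sake of contradiction, that $\conv(C_1\cup C_2)$ is closed. Since Assumptions~\ref{As:A1} and \ref{As:A2} are in force (they are part of the basic disjunctive setup), Proposition~\ref{prop:closedness} applies and tells us that $\conv(C_1\cup C_2)$ is described by the cone constraint $x\in\K$ together with the family of undominated valid linear inequalities that are strongly tight on both $C_1$ and $C_2$.

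Next I would invoke Proposition~\ref{prop:nontight}: under the hypothesis $c_{1,0}>c_{2,0}$, every undominated valid linear inequality for $\clconv(C_1\cup C_2)$ is tight on $C_2$ but not on $C_1$. In particular, no such inequality is strongly tight on $C_1$, since strong tightness is a strictly stronger property than tightness. Hence the family of undominated valid linear inequalities that are strongly tight on \emph{both} $C_1$ and $C_2$ is empty.

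Combining these two observations, the only constraint left in the description of $\conv(C_1\cup C_2)$ afforded by Proposition~\ref{prop:closedness} is $x\in\K$, so $\conv(C_1\cup C_2)=\K$. Because this set is closed by assumption, we also get $\clconv(C_1\cup C_2)=\K$, contradicting the hypothesis $\clconv(C_1\cup C_2)\neq\K$. There is no real obstacle here; the main thing to verify carefully is simply that "not tight on $C_1$" implies "not strongly tight on $C_1$," which follows immediately from the definitions in Section~\ref{sec:sub:VLIProperties}, and that Proposition~\ref{prop:closedness} is indeed applicable (its hypotheses coincide with the basic disjunctive setup).
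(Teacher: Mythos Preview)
Your argument is correct and is essentially the same as the paper's proof: both assume $\conv(C_1\cup C_2)$ closed, invoke Proposition~\ref{prop:closedness} to obtain undominated inequalities (strongly) tight on both $C_1$ and $C_2$, and then use Proposition~\ref{prop:nontight} to reach a contradiction. The only cosmetic difference is that the paper picks a point $x\in\K\setminus\conv(C_1\cup C_2)$ and contradicts the existence of a single separating inequality, whereas you deduce that the whole family is empty and hence $\conv(C_1\cup C_2)=\K$; these are logically equivalent phrasings of the same step.
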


\begin{proof}
Suppose $\conv(C_1\cup C_2)$ is closed, and let $x\in\K\setminus\conv(C_1\cup C_2)$. By Proposition~\ref{prop:closedness}, there exists an undominated valid linear inequality which cuts off $x$ from $\conv(C_1\cup C_2)$ and is tight on both $C_1$ and $C_2$. This contradicts Proposition~\ref{prop:nontight}.
\end{proof}

\subsection{Describing the Closed Convex Hull}\label{sec:sub:ConvexHull}

As Proposition~\ref{prop:nontight} hints, there are cases where valid linear inequalities that have $\beta_1=\beta_2=1$ in \eqref{eq:VLIReduced} may not be sufficient to describe $\clconv(C_1\cup C_2)$. In this section, we study these cases when $\K=\K_2^n$ and outline a procedure to find closed-form expressions describing $\clconv(C_1\cup C_2)$. Note that, by Theorem~\ref{thm:SingleIneq}, when $c_1\in\K_2^n$ or $c_2\in\K_2^n$, the convex valid inequality \eqref{eq:SingleIneq} is sufficient to describe $\clconv(C_1\cup C_2)$. Similarly, when the sets $\capD_1$ and $\capD_2$ defined in \eqref{eq:BetaSets} are both nonempty, \eqref{eq:SingleIneq} is sufficient to describe $\clconv(C_1\cup C_2)$. Hence, these cases are not of interest to us in this section, and we assume that $c_1,c_2\notin\K_2^n$ and at least one of $\capD_1$ and $\capD_2$ is empty. We analyze the remaining cases through a breakdown based on whether $\capD_1$ and $\capD_2$ are empty or not. Note that for now we do not make the assumption that $c_{1,0}\geq c_{2,0}$; therefore, the roles of $C_1$ and $C_2$ are completely symmetric.

Let $C_1$ and $C_2$ be defined as in \eqref{eq:DisjSet} with $\K=\K_2^n$, and let
\begin{align*}
\capB_1:=\capB_1(c_1,c_2)=\{\beta_1\in\R_+:\;\beta_1 c_{1,0}\geq c_{2,0},\;c_2-\beta_1 c_1\not\in\pm\intt\K_2^n\},\\
\capB_2:=\capB_2(c_1,c_2)=\{\beta_2\in\R_+:\;\beta_2 c_{2,0}\geq c_{1,0},\;c_1-\beta_2 c_2\not\in\pm\intt\K_2^n\}.
\end{align*}
Using Remark~\ref{rem:VLIReduced}, it is clear that one only needs to consider $\beta_1\in\capB_1$ in order to capture all undominated valid linear inequalities that have a representation with $\beta_2=1$ in \eqref{eq:VLIReduced}. Similarly, one only needs to consider $\beta_2\in\capB_2$ to capture the inequalities that have a representation with $\beta_1=1$ in \eqref{eq:VLIReduced}. Therefore, by Theorem~\ref{thm:main},
\begin{equation*}
\clconv(C_1\cup C_2)=
\{x\in\K_2^n:x\text{ satisfies }\eqref{eq:main}\;\forall\beta\in\capB_1\text{ and }\eqref{eq:main2}\;\forall\beta\in\capB_2\}.
\end{equation*}
Note that for any $\beta_1\in\capB_1$ and $\beta_2\in\capB_2$, we have $\cN_1(\beta_1)\geq 0$ and $\cN_2(\beta_2)\geq 0$; hence, the right hand sides of the inequalities above are well defined for any $x\in\K_2^n$. Using the structure of $\K_2^n$, we can process the definition of $\capB_1$ above and arrive at
\begin{align}
\!\!\!\!\!\capB_1\!\!&=\!\!\{\beta\in\R_+:\;\beta c_{1,0}\geq c_{2,0},\;\norm{\tilde{c}_2-\beta\tilde{c_1}}_2\geq |c_{2,n}-\beta c_{1,n}|\}\notag\\
&=\!\left\{\beta\in\R_+\!:\begin{array}{l}\beta c_{1,0}\geq c_{2,0},\\
\!\left(\|\tilde{c}_1\|_2^2\!-\!c_{1,n}^2\right)\beta^2 \!-\! 2\left(\tilde{c}_1^\top\tilde{c}_2\!-\!c_{1,n}c_{2,n}\right)\beta\!+\! \|\tilde{c}_2\|_2^2\!-\!c_{2,n}^2\!\geq 0\end{array}\right\}.\label{eq:B1}
\end{align}
Similarly,
\begin{equation*}
\capB_2\!\!=
\!\!\left\{\beta\in\R_+\!:\begin{array}{l}\beta c_{2,0}\geq c_{1,0},\\
\!\left(\|\tilde{c}_2\|_2^2\!-\!c_{2,n}^2\right)\beta^2 \!-\! 2\left(\tilde{c}_1^\top\tilde{c}_2\!-\!c_{1,n}c_{2,n}\right)\beta\!+\! \|\tilde{c}_1\|_2^2\!-\!c_{1,n}^2\!\geq 0\end{array}\right\}.
\end{equation*}
In the following, based on the feasibility status of $\capD_1,\capD_2$, we show that the description of $\capB_1$ and $\capB_2$ can be simplified.

\subsubsection{$\capD_1=\emptyset$ and $\capD_2\neq\emptyset$}

First note that having $\capD_1=\emptyset$ implies $c_1\notin-\intt\K_2^n$. Furthermore, any $\beta\in\capD_2$ must satisfy $\beta>0$ since otherwise, either $c_1\in\K_2^n$ or $\capD_1\neq\emptyset$. Then by Lemma~\ref{lem:inconsistent}, $\beta c_{2,0}<c_{1,0}$, and $c_{2,0}\leq c_{1,0}$ because $\beta>0$ and $c_{1,0},c_{2,0}\in\{0,\pm 1\}$. This also implies that we cannot have $\capD_1=\emptyset$ and $\capD_2\neq\emptyset$ when $c_{1,0}=c_{2,0}=0$.

Recall that when $c_{1,0}>c_{2,0}$, Remark~\ref{rem:VLIReduced} showed that we can assume $\beta_2=1$ in an undominated valid linear inequality that satisfies \eqref{eq:VLIReduced}. In Proposition~\ref{prop:TightOnOne} below, we prove a similar result for the case $c_{1,0}=c_{2,0}\in\{\pm 1\}$ when $\capD_1=\emptyset$ and $\capD_2\neq\emptyset$. Its proof is left to the appendix.

\begin{proposition}\label{prop:TightOnOne}
Let $C_1, C_2$ satisfy the basic disjunctive setup with $c_{1,0}=c_{2,0}\in\{\pm 1\}$.
If $\capD_1=\emptyset$ and $\capD_2\neq\emptyset$, then every undominated valid linear inequality for $\clconv(C_1\cup C_2)$ has the form $\mu^\top x\geq c_{2,0}$ where $\mu$ satisfies \eqref{eq:VLIReduced} with $\beta_2=1$.
\end{proposition}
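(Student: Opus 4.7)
The plan is to take an undominated valid linear inequality $\mu^\top x \ge c_{2,0}$ written in the form of part~(ii) of Remark~\ref{rem:VLIReduced} --- that is, $\mu = \alpha_1 + c_1 = \alpha_2 + \beta c_2$ with $\alpha_1, \alpha_2 \in \bd\K^*$, $\beta > 0$, and $\beta c_{2,0} \ge c_{1,0}$ --- and to rewrite it in the form of part~(i), namely with $\beta_2 = 1$. If $\mu$ already admits part~(i) there is nothing to do. As a preliminary reduction, if $c_1 \in \K^*$ then Lemma~\ref{lem:DualCone} directly yields $\beta_1 = \beta_2 = 1$ for every undominated valid inequality and we are done; moreover $0 \in \capD_1 \Leftrightarrow c_2 \in \K^*$, so $\capD_1 = \emptyset$ already forces $c_2 \notin \K^*$. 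Hence I may assume $c_1, c_2 \notin \K^*$, and in particular $0 \notin \capD_2$.

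The crux is to locate any $\beta^* \in \capD_2$ relative to $1$. First, $\capD_1 = \emptyset$ forces $\capD_2 \subseteq (0,\infty)$: if $\beta^* \in \capD_2$ were negative, dividing the membership $c_1 - \beta^* c_2 \in \K^*$ by $-\beta^* > 0$ yields $c_2 - (1/\beta^*) c_1 \in \K^*$, i.e., $1/\beta^* \in \capD_1$, a contradiction. Second, Lemma~\ref{lem:inconsistent} applied to system~\eqref{eq:inconsistent2} rules out $\beta^* \ge 0$ in $\capD_2$ with $\beta^* c_{2,0} \ge c_{1,0}$. Combining the two, $\beta^* \in (0,1)$ when $c_{1,0} = c_{2,0} = 1$, while $\beta^* \in (1,\infty)$ when $c_{1,0} = c_{2,0} = -1$. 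Meanwhile, the constraint $\beta c_{2,0} \ge c_{1,0}$ in part~(ii) forces $\beta \ge 1$ in the first sign case and $\beta \le 1$ in the second. In both cases $1$ lies in the closed interval between $\beta$ and $\beta^*$.

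The main step is then a convexity argument on $I := \{s \in \R : \mu - sc_2 \in \K^*\}$, a closed convex subset of $\R$. By construction $\beta \in I$, since $\mu - \beta c_2 = \alpha_2 \in \K^*$; and $\beta^* \in I$, since $\mu - \beta^* c_2 = \alpha_1 + (c_1 - \beta^* c_2) \in \K^* + \K^* \subseteq \K^*$. By the previous paragraph $1 \in I$, so $\alpha_2' := \mu - c_2 \in \K^*$, and we obtain $\mu = \alpha_1 + c_1 = \alpha_2' + c_2$, a representation with $\beta_1 = \beta_2 = 1$ satisfying the first three conditions of~\eqref{eq:VLIReduced}. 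To upgrade to $\alpha_2' \in \bd\K^*$, I would observe that if $\alpha_2' \in \intt\K^*$ then the same perturbation arguments used in the proof of Proposition~\ref{prop:VLIReduced} (handling both the $\alpha_1 = 0$ and $\alpha_1 \neq 0$ subcases) produce a valid inequality that strictly dominates $\mu^\top x \ge c_{2,0}$, contradicting undominatedness. Combined with $\alpha_1 \in \bd\K^*$ from the original representation, this yields \eqref{eq:VLIReduced} with $\beta_2 = 1$.

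The main obstacle is establishing the implication $\capD_1 = \emptyset \Rightarrow \capD_2 \subseteq (0,\infty)$ together with its quantitative companion from Lemma~\ref{lem:inconsistent}: without both pieces, $1$ need not lie between $\beta$ and $\beta^*$ and the convexity argument on $I$ collapses. Everything else is sign bookkeeping over the two cases $c_{1,0} = c_{2,0} = \pm 1$ and a standard appeal to the perturbation machinery of Proposition~\ref{prop:VLIReduced}.
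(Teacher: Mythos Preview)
Your proof is correct and rests on the same two observations the paper uses: that $\capD_1=\emptyset$ forces every $\beta^*\in\capD_2$ to be positive, and that Lemma~\ref{lem:inconsistent} then places $\beta^*$ on the opposite side of $1$ from the multiplier $\beta$ appearing in a form-(ii) representation.

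Where you and the paper diverge is in how this positioning is exploited. The paper, starting from $\nu=\alpha_1+c_1=\alpha_2+\beta_2 c_2$ and $\delta\in\capD_2$, builds a \emph{new} inequality $\mu:=\lambda\nu+(1-\lambda)c_1$ with $\lambda$ chosen so that the resulting second multiplier is $\lambda\beta_2+(1-\lambda)\delta=1$; since $\nu-\mu=(1-\lambda)\alpha_1\in\K^*$, this $\mu$ dominates $\nu$ unless $\alpha_1=0$, in which case $\nu=\mu$ already carries the desired $\beta_2=1$ representation. You instead keep $\mu$ fixed and argue on the interval $I=\{s:\mu-sc_2\in\K^*\}$: it contains both $\beta$ (via $\alpha_2$) and $\beta^*$ (via $\alpha_1+(c_1-\beta^* c_2)\in\K^*+\K^*$), so by convexity $1\in I$, and $\alpha_2':=\mu-c_2\in\K^*$ furnishes the representation directly. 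Your route sidesteps the paper's dichotomy on $\alpha_1$ and shows outright that the \emph{same} $\mu$ admits the claimed form; both arguments then finish with the identical boundary check on $\alpha_2'$ via the perturbation from Proposition~\ref{prop:VLIReduced}. One small note: in that final step your $\alpha_1=0$ subcase is actually vacuous (it would force $c_1-c_2\in\intt\K^*$, hence $1\in\capD_2$, contradicting what you already established), so only the $\alpha_1\neq 0$ perturbation is ever invoked.
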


As a result of Proposition~\ref{prop:TightOnOne}, we have
\begin{equation*}
\clconv(C_1\cup C_2)=
\{x\in\K_2^n:x\text{ satisfies }\eqref{eq:main}\;\forall\beta\in\capB_1\}.
\end{equation*}

When $c_1\in-\bd\K_2^n$, we have $\|\tilde{c}_1\|_2^2-c_{1,n}^2=0$ and the second constraint in the definition of $\capB_1$ reduces to a linear inequality. Thus, $\capB_1$ is a closed interval of the nonnegative half-line in this case. On the other hand, when $c_1\notin\pm\K_2^n$, the structure of $\capB_1$ can be slightly more complicated. Since $c_1\notin\pm\K_2^n$, we can write $\capB_1$ as
\begin{equation*}
\capB_1\!=
\!\{\beta\in\R_+:\;\beta c_{1,0}\geq c_{2,0}\}\!\bigcap\left(\{\beta\!:\beta\leq\beta_1^-\}\cup\{\beta\!:\beta\geq\beta_1^+\}\right).
\end{equation*}
where $\beta_1^\pm$ are the values of $\beta$ that satisfy the second constraint in \eqref{eq:B1} with equality:
\begin{equation*}
\beta_1^\pm=\frac{\tilde{c}_1^\top\tilde{c}_2-c_{1,n}c_{2,n}\pm\sqrt{\left(\tilde{c}_1^\top\tilde{c}_2-c_{1,n}c_{2,n}\right)^2- \left(\|\tilde{c}_1\|_2^2-c_{1,n}^2\right)\left(\|\tilde{c}_2\|_2^2-c_{2,n}^2\right)}}{\|\tilde{c}_1\|_2^2-c_{1,n}^2}.
\end{equation*}
Recall that any $\beta\in\capD_2$ satisfies $\beta>0$ because $c_1\notin\K_2^n$ and $\capD_1=\emptyset$. Hence, $c_2-\frac{1}{\beta}c_1\in-\K^*$, and because $c_1\notin\pm\K^*$, the roots $\beta_1^\pm$ are well-defined: They are exactly the values of $\beta$ that yield $c_2-\beta c_1\in-\bd\K^*$. Suppose $\beta_1^-,\beta_1^+\in\{\beta\in\R_+:\beta c_{1,0}\geq c_{2,0}\}$. When $c_{2,0}\geq 0$, the inequality $\beta_1^-c_1^\top x\geq c_{2,0}$ dominates (or is equivalent to) all valid linear inequalities that correspond to $\beta\geq\beta^+$. Similarly, when $c_{2,0}=-1$, the inequality $\beta_1^+c_1^\top x\geq c_{2,0}$ dominates (or is equivalent to) all valid linear inequalities that correspond to $\beta\leq\beta^-$. Therefore, $\capB_1$ can always be reduced to a single closed interval of the nonnegative half-line in this case as well.

\subsubsection{$\capD_1=\capD_2=\emptyset$}

The hypothesis $\capD_1=\capD_2=\emptyset$ implies $c_1,c_2\notin-\intt\K_2^n$. Observe that for any $\beta\geq 0$, we must have $c_2-\beta_1c_1\notin\pm\intt\K_2^n$ since having $c_2-\beta c_1\in\intt\K_2^n$ contradicts $\capD_1=\emptyset$ and having $c_2-\beta c_1\in-\intt\K_2^n$ contradicts $\capD_2=\emptyset$. Similarly, for any $\beta\geq 0$, we must have $c_1-\beta c_2\notin\pm\intt\K_2^n$. Therefore, we can drop the second constraint in the definitions of $\capB_1$ and $\capB_2$ and write
\begin{align*}
&\capB_1=\{\beta\in\R_+:\;\beta c_{1,0}\geq c_{2,0}\},
&\capB_2=\{\beta\in\R_+:\;\beta c_{2,0}\geq c_{1,0}\}.
\end{align*}

\subsubsection{Finding the Best $\beta$}

Suppose $C_1$ and $C_2$ satisfy the second-order disjunctive setup. For ease of notation, let us define
\begin{align*}
\cR:=\cR(c_1,c_2,x) &=(c_1^\top x)^2 + (\|\tilde{c}_1\|_2^2-c_{1,n}^2)(x_n^2-\|\tilde{x}\|_2^2),\\
\cP:=\cP(c_1,c_2,x) &=(c_1^\top x)(c_2^\top x) + (\tilde{c}_1^\top\tilde{c}_2-c_{1,n}c_{2,n})(x_n^2-\|\tilde{x}\|_2^2),\\
\cQ:=\cQ(c_1,c_2,x) &=(c_2^\top x)^2 + (\|\tilde{c}_2\|_2^2-c_{2,n}^2)(x_n^2-\|\tilde{x}\|_2^2),
\end{align*}
and $f_1^{c_1,c_2,x}(\beta):=\beta c_1^\top x + \sqrt{\cR\beta^2-2\cP\beta+\cQ}$. Then
\begin{equation*}
\cR\beta^2-2\cP\beta+\cQ=\left((\beta c_1-c_2)^\top x\right)^2\!+\cN_1(\beta)\left(x_n^2-\|\tilde{x}\|_2^2\right).
\end{equation*}
Similarly, define $f_2^{c_1,c_2,x}(\beta):=\beta c_2^\top x + \sqrt{\cQ\beta^2-2\cP\beta+\cR}$ and note
\begin{equation*}
\cQ\beta^2-2\cP\beta+\cR=\left((c_1-\beta c_2)^\top x\right)^2\!+\cN_2(\beta)\left(x_n^2-\|\tilde{x}\|_2^2\right).
\end{equation*}
Through these definitions, we reach
\begin{align*}
\clconv(C_1\cup C_2)&=\left\{x\in\K_2^n:
\begin{array}{c}
2c_{2,0}-c_2^\top x\leq f_1^{c_1,c_2,x}(\beta_1)\;~\forall\beta_1\in B_1,\\
2c_{2,0}-c_1^\top x\leq f_2^{c_1,c_2,x}(\beta_2)\;~\forall\beta_2\in B_2
\end{array}
\right\}\\
&=\left\{x\in\K_2^n:
\begin{array}{c}
2c_{2,0}-c_2^\top x\leq\inf_{\beta_1\in B_1}f_1^{c_1,c_2,x}(\beta_1),\\
2c_{2,0}-c_1^\top x\leq\inf_{\beta_2\in B_2}f_2^{c_1,c_2,x}(\beta_2)
\end{array}
\right\}.
\end{align*}

For any $x\in\bd\K$, we have
\begin{align*}
f_1^{c_1,c_2,x}(\beta_1)=\max\{(2\beta_1c_1-c_2)^\top x,c_2^\top x\},
f_2^{c_1,c_2,x}(\beta_2)=\max\{(2\beta_2c_2-c_1)^\top x,c_1^\top x\}
\end{align*}
which can be easily minimized over $\beta_1\in B_1$ and $\beta_2\in B_2$. For any $x\in\intt K$, the expressions inside the square roots in the definitions of $f_1^{c_1,c_2,x}$ and $f_2^{c_1,c_2,x}$ are strictly positive for all $\beta_1\in B_1$ and $\beta_2\in B_2$, respectively. A function of the form $g(\beta)=a\beta+\sqrt{r\beta^2-2p\beta+q}$ is differentiable in $\beta$ where $r\beta^2-2p\beta+q>0$. Furthermore, it is concave if $p^2\geq qr$ and convex if $p^2\leq qr$. Whenever the infimum of $g$ over a closed interval of the real line is finite, it will be achieved either at a critical point of $g(\beta)$, where its derivative with respect to $\beta$ vanishes, or at one of the boundary points of $\capB(c_1,c_2)$. When $r>a^2$ and $g$ is convex, the critical point of $g$ is given by
\begin{equation*}
\beta^* = \frac{p}{r}-\frac{a}{r}\sqrt{\frac{p^2-qr}{a^2-r}},
\end{equation*}
and the corresponding value of the function at this critical point is
\begin{align*}
g(\beta^*)&=\frac{ap}{r}+\left(1-\frac{a^2}{r}\right)\sqrt{\frac{p^2-qr}{a^2-r}}\\
&=\frac{1}{r}\left(ap+\sign(r-a^2)\sqrt{|(p^2-qr)(a^2-r)|}\right).
\end{align*}
Replacing $p,q,r,a$ with $\cP,\cQ,\cR,c_1^\top x$, we define
\begin{equation*}
\beta_1^*:=\beta_1^*(c_1,c_2,x)\!=\!\frac{\cP}{\cR} - \frac{c_1^\top\!x}{\cR} \sqrt{\frac{\cP^2\!-\!\cQ\cR}{\left(c_{1,n}^2\!-\!\|\tilde{c}_1\|_2^2\right)\left(x_n^2\!-\!\|\tilde{x}\|_2^2\right)}}.
\end{equation*}
Note that $c_{1,n}^2-\norm{\tilde{c}_1}_2^2>0$ when $c_1\notin\pm\K^*$. Under these circumstances, for all $x\in\intt\K_2^n$ such that $\beta_1^*(c_1,c_2,x)\in\capB_1(c_1,c_2)$, we can enforce the inequality $2c_{2,0}-c_2^\top x\leq f_1^{c_1,c_2,x}(\beta_1^*)$. This inequality is only valid for those $x\in\K_2^n$ satisfying $\beta_1^*(c_1,c_2,x)\in\capB_1(c_1,c_2)$, but for these points, it completely summarizes all other valid inequalities of the form \eqref{eq:main} arising from $\beta\in\capB_1(c_1,c_2)$.

Similarly, we define
\begin{equation*}
\beta_2^*:=\beta_2^*(c_1,c_2,x)\!=\!\frac{\cP}{\cQ} - \frac{c_2^\top\!x}{\cQ} \sqrt{\frac{\cP^2\!-\!\cQ\cR}{\left(c_{2,n}^2\!-\!\|\tilde{c}_2\|_2^2\right)\left(x_n^2\!-\!\|\tilde{x}\|_2^2\right)}}.
\end{equation*}
Therefore, for all $x\in\intt\K_2^n$ such that $\beta_2^*(c_1,c_2,x)\in\capB_2(c_1,c_2)$, we have $2c_{2,0}-c_1^\top x\leq f_2^{c_1,c_2,x}(\beta_2^*)$ as a valid inequality completely summarizing all other valid inequalities of the form \eqref{eq:main2} arising from $\beta\in\capB_2(c_1,c_2)$.

\subsection{Example where Multiple Inequalities are Needed}\label{sec:sub:NonTightExample}

Consider $\K_2^3$ and the disjunction given by $-x_2\geq 0$ or $-x_3\geq -1$. Since $c_{1,0}>c_{2,0}$, by Proposition~\ref{prop:nontight}, we know that every undominated valid linear inequality for $\clconv(C_1\cup C_2)$ will be tight on $C_2$ but not on $C_1$. Therefore, we follow the approach outlined in Section~\ref{sec:sub:ConvexHull}. By noting that $c_1=-e^2\not\in\pm K_2^3$, we obtain $\capB_1(c_1,c_2)=\{\beta\in\R:\,\beta\geq 1\}$.
It is also clear that $\capB_2(c_1,c_2)=\emptyset$ in this case.

In this setup we have
\begin{align*}
\cR(c_1,c_2,x) &=
x_3^2-x_1^2,\\
\cP(c_1,c_2,x) &=
x_2x_3,\\
\cQ(c_1,c_2,x) &=
x_1^2+x_2^2,
\end{align*}
and the resulting $f_1^{c_1,c_2,x}(\beta)$ is a convex function of $\beta$. Hence we get 
\begin{align*}
\beta_1^*(c_1,c_2,x)
&=\frac{x_2x_3}{x_3^2-x_1^2}+\frac{x_2}{x_3^2-x_1^2}\sqrt{\frac{ x_2^2x_3^2-(x_1^2+x_2^2)(x_3^2-x_1^2)}{(-1)(x_3^2-x_1^2-x_2^2)}}\\
&=\frac{x_2x_3+|x_1|x_2}{x_3^2-x_1^2}=\frac{x_2}{x_3-|x_1|}
\end{align*}
where in the last equation we used the fact that $x\in\K_2^3$ and hence $x_3\geq 0$. This leads to $f_1^{c_1,c_2,x}(\beta_1^*)= |x_1|-\frac{x_2^2(x_3+|x_1|)}{x_3^2-x_1^2}= |x_1|-\frac{x_2^2}{x_3-|x_1|}$.

Therefore, for all $x\in\K_2^3$ such that $\beta_1^*\geq 1$, that is, $|x_1|\geq x_3-x_2$, we can enforce $2c_{2,0}-c_2^\top x\leq f_1^{c_1,c_2,x}(\beta_1^*)$ which translates to $-2+x_3\leq|x_1|-\frac{x_2^2}{x_3-|x_1|}$ in this example. Moreover, $\bd\capB_1(c_1,c_2)=\{1\}$, and for this particular value of $\beta=1$, using \eqref{eq:linear} in Remark~\ref{rem:linear}, we obtain $x_2\leq 1$ as a valid linear inequality for all $x\in\clconv(C_1\cup C_2)$. Putting these two inequalities together, we arrive at
\begin{equation*}
\clconv(C_1\cup C_2)=\left\{x\in\K_2^3\!:\;x_2\leq 1,\;1+|x_1|-x_3\leq\sqrt{1-\max\{0,x_2\}^2}\right\},
\end{equation*}
where both inequalities are convex (even when we ignore the constraint $x\in\K_2^3$). In fact, both inequalities describing $\clconv(C_1\cup C_2)$ are conic quadratic representable in a lifted space as expected.

In Figures~\ref{fig:NT}(a) and (b), we plot the disjunctive set and the resulting closed convex hull, respectively. In order to give a better picture of the convexification of the set, we show the points added due to the convex hull operation in Figure~\ref{fig:NT}(c).

\begin{figure}[h]
\begin{center}
\subfigure[$C_1\cup C_2$]{
\includegraphics[scale=0.33]{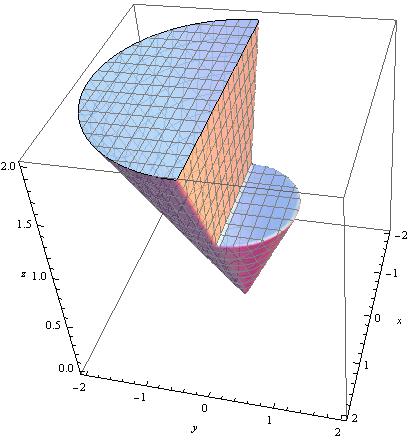}
}
\qquad\quad
\subfigure[$\clconv(C_1\cup C_2)$]{
\includegraphics[scale=0.33]{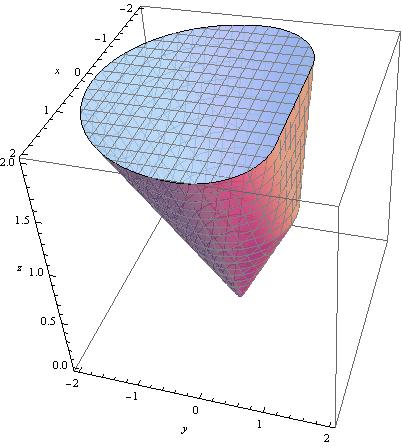}
}\\
\subfigure[$\clconv(C_1\cup C_2)\setminus(C_1\cup C_2)$]{
\includegraphics[scale=0.33]{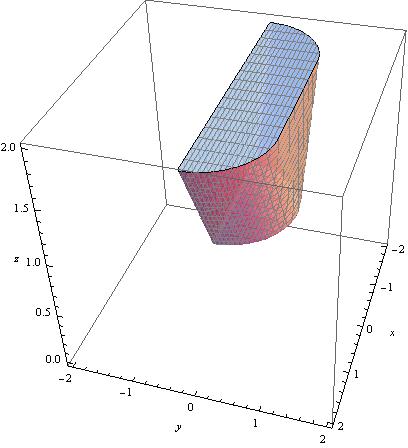}
}
\end{center}
\caption{Sets associated with the disjunction $-x_2\geq 0\,\vee\,-x_3\geq -1$ on $\K_2^3$.}
\label{fig:NT}
\end{figure}

\section{Elementary Split Disjunctions for $p$-order Cones}\label{sec:p-order}

Given $w\in\R^d$ and $p\in(1,\infty)$, recall that the $p$-norm $\norm{\cdot}_p:\R^d\rightarrow\R$ is defined as
\begin{equation*}
\norm{w}_p:=\left(\sum_{j=1}^d |w_j|^p\right)^{1/p}.
\end{equation*}
Its dual norm is the function $\norm{\xi}_p^*:=\max\{\xi^\top w:\,\norm{w}_p\leq 1\,\forall w\in\R^d\}$ and corresponds to the $q$-norm on $\R^d$ where $\frac{1}{p}+\frac{1}{q}=1$. In this section we consider the $n$-dimensional $p$-order cone $\K_p^n:=\left\{x\in\R^n:\norm{\tilde{x}}_p\leq x_n\right\}$, which is a regular cone and whose dual cone is simply $\K_q^n$.

The main result of this section shows that the techniques of the previous sections can be used to describe the convex hull of the set obtained by applying an elementary split disjunction on $\K_p^n$. Let $c_1=t_1e^i$, $c_2=-t_2e^i$, $c_{1,0},c_{2,0}\in\{0,\pm 1\}$ where $t_1,t_2>0$, $e^i$ is the $i^{\text{th}}$ standard unit vector, and $i\in\{1,\ldots,n-1\}$. Note that $\conv(C_1\cup C_2)$ is closed by Corollary~\ref{cor:ClosedSuff} and $\conv(C_1\cup C_2)=\K_p^n$ unless $c_{1,0}=c_{2,0}=1$ by Lemma~\ref{lem:closure}. Hence, we consider the case $c_{1,0}=c_{2,0}=1$ only. This result recovers and provides an independent proof of Corollary 1 in \cite{MKV}. Its proof follows the same outline as the proof of Theorem~\ref{thm:main} and is therefore left to the appendix.

\begin{corollary}\label{cor:p-order}
Let $C_1:=\{x\in\K_p^n:\,t_1x_i\geq 1\}$ and $C_2:=\{x\in\K_p^n:\;-t_2x_i\geq 1\}$ where $t_1,t_2>0$, $p\in(1,\infty)$, and $i\in\{1,\ldots,n-1\}$. Let $e^i$ denote the $i^{\text{th}}$ standard unit vector. Then
\begin{equation*}\label{eq:p-order}
\conv(C_1\cup C_2)=\left\{x\in\K_p^n:\;\norm{(t_1+t_2)\tilde{x}-2(t_2x_i+1)\tilde{e}^i}_p\leq(t_1+t_2)x_n\right\}.
\end{equation*}
\end{corollary}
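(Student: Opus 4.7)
The plan is to mirror the outline of Theorem~\ref{thm:main}: characterize the set $\capM$ of vectors $\mu$ yielding undominated valid linear inequalities, pass to a convex relaxation of $\capM$, compute $\inf_{\mu\in\capM}\mu^\top x$ via conic duality, and rearrange the closed-form answer into the claimed conic $p$-order form.

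First, I would check the basic disjunctive setup of Section~\ref{sec:sub:notation} and the closedness of $\conv(C_1\cup C_2)$. Corollary~\ref{cor:ClosedSuff} applied with $\beta_1=-t_2/t_1$ and $\beta_2=-t_1/t_2$ gives $c_2-\beta_1 c_1=0$ and $c_1-\beta_2 c_2=0$, both in $\K_q^n=(\K_p^n)^*$, so $\conv(C_1\cup C_2)$ is closed. Since $c_{1,0}=c_{2,0}=1$, Proposition~\ref{prop:closedness} together with Lemma~\ref{lem:tight} reduces the description of $\conv(C_1\cup C_2)$ to the cone constraint $x\in\K_p^n$ together with the undominated valid linear inequalities $\mu^\top x\geq 1$ whose representation \eqref{eq:VLIReduced} has $\beta_1=\beta_2=1$; that is, the members of
\[
\capM:=\{\alpha_1+t_1 e^i:\,\alpha_1\in\bd\K_q^n,\;\alpha_1+(t_1+t_2)e^i\in\bd\K_q^n\}.
\]

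Next, I would simplify $\capM$. The two boundary conditions read $\|\tilde\alpha_1\|_q^q=\alpha_{1,n}^q=\|\tilde\alpha_1+(t_1+t_2)\tilde e^i\|_q^q$; these agree on every coordinate except the $i$-th, so their subtraction forces $|\alpha_{1,i}|^q=|\alpha_{1,i}+(t_1+t_2)|^q$, whose only real solution is $\alpha_{1,i}=-(t_1+t_2)/2$. Hence $\mu_i=(t_1-t_2)/2$ and $\capM=\{\mu:\,\mu_i=(t_1-t_2)/2,\;\mu-t_1 e^i\in\bd\K_q^n\}$. Exactly as in the proof of Theorem~\ref{thm:main}, I would relax $\bd\K_q^n$ to $\K_q^n$; the relaxed feasible set is the intersection of a hyperplane with the translated convex cone $t_1 e^i+\K_q^n$, so every relaxed feasible point decomposes as a convex combination of original feasible points and the linear infimum is unchanged.

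The relaxed program is strictly feasible in its conic constraint, so strong duality applies and
\[
\inf_{\mu\in\capM}\mu^\top x=t_1 x_i+\sup\bigl\{-\tfrac{t_1+t_2}{2}\lambda:\;x-\lambda e^i\in\K_p^n\bigr\}.
\]
Setting $S:=\bigl(x_n^p-\|\tilde x\|_p^p+|x_i|^p\bigr)^{1/p}$, the constraint $x-\lambda e^i\in\K_p^n$ reads $|x_i-\lambda|\leq S$, so $\lambda^*=x_i-S$ and the resulting valid inequality is $(t_1+t_2)S\geq 2-(t_1-t_2)x_i$. The companion inequality $(t_1+t_2)S\geq(t_1-t_2)x_i-2$ is automatic on $\K_p^n$: indeed $|x_i|\leq S$ there, whence $(t_1-t_2)x_i-(t_1+t_2)S\leq\bigl(|t_1-t_2|-(t_1+t_2)\bigr)S=-2\min(t_1,t_2)S\leq 0<2$. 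Combining the two yields $|(t_1-t_2)x_i-2|\leq(t_1+t_2)S$, and raising to the $p$-th power gives $|(t_1-t_2)x_i-2|^p+(t_1+t_2)^p\bigl(\|\tilde x\|_p^p-|x_i|^p\bigr)\leq(t_1+t_2)^p x_n^p$, which is exactly $\|(t_1+t_2)\tilde x-2(t_2x_i+1)\tilde e^i\|_p\leq(t_1+t_2)x_n$. I expect the only genuinely delicate step to be the justification that the convex relaxation of $\capM$ is tight for the linear infimum; once that is in hand, the duality computation and the conic rearrangement are straightforward.
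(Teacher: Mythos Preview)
Your proposal is correct and follows essentially the same route as the paper's proof: verify closedness via Corollary~\ref{cor:ClosedSuff}, reduce to $\beta_1=\beta_2=1$ through Proposition~\ref{prop:closedness} and Lemma~\ref{lem:tight}, simplify $\capM$ to the hyperplane $\mu_i=(t_1-t_2)/2$ intersected with a shifted $\K_q^n$, relax the boundary constraint, dualize, and rearrange. Your argument for the companion inequality $(t_1+t_2)S\geq(t_1-t_2)x_i-2$ via $|x_i|\leq S$ and $|t_1-t_2|-(t_1+t_2)=-2\min(t_1,t_2)\leq 0$ is in fact cleaner than the paper's case split on $u\in C_1$ versus $u\in C_2$, but the overall architecture is identical.
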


\bibliographystyle{plain}
\bibliography{ref}

\newpage

\section{Appendix}
\subsection{Proofs of Section \ref{sec:preliminaries}}

\begin{proof}[of Lemma~\ref{lem:closure}]
To prove the first claim, suppose $S_1\cup S_2\subsetneq S$ and pick $x_0\in S\setminus(S_1\cup S_2)$. Also, pick $x_1\in S_1\setminus S_2$ and $x_2\in S_2\setminus S_1$. Let $x'$ be the point on the line segment between $x_0$ and $x_1$ such that $c_1^\top x'=c_{1,0}$. Similarly, let $x''$ be the point between $x_0$ and $x_2$ such that $c_2^\top x''=c_{2,0}$. Note that $x'\notin S_2$ and $x''\notin S_1$ by the convexity of $S\setminus S_1$ and $S\setminus S_2$. Then a point that is a strict convex combination of $x'$ and $x''$ is in $\conv(S_1\cup S_2)$ but not in $S_1\cup S_2$.

Corollary 9.1.2 in \cite{R1970} implies $S_1^+$ and $S_2^+$ are closed and $\rec S_1^+=\rec S_2^+=\rec S_1+\rec S_2$ because $S$ is pointed. The inclusions $S_1\subseteq S_1^+$ and $S_2\subseteq S_2^+$ imply that $\conv(S_1\cup S_2)\subseteq\conv(S_1^+\cup S_2^+)$. Furthermore, $\conv(S_1^+\cup S_2^+)$ is closed by Corollary 9.8.1 in \cite{R1970} since $S_1^+$ and $S_2^+$ have the same recession cone. Hence, $\clconv(S_1\cup S_2)\subseteq\conv(S_1^+\cup S_2^+)$. We claim $\clconv(S_1\cup S_2)=\conv(S_1^+\cup S_2^+)$. Let $x^+\in\conv(S_1^+\cup S_2^+)$. Then there exist $u_1\in S_1$, $v_2\in\rec S_2$, $u_2\in S_2$, and $v_1\in\rec S_1$ such that $x^+\in\conv\{u_1+v_2,u_2+v_1\}$. To prove the claim, it is enough to show that $u_1+v_2,u_2+v_1\in\clconv(S_1\cup S_2)$. Consider the point $u_1+v_2$ and the sequence
\begin{equation*}
\left\{\left(1-\frac{1}{k}\right)u_1+\frac{1}{k}\left(u_2+k v_2\right)\right\}_{k\in\N}.
\end{equation*}
For any $k\in\N$, we have $u_1\in S_1$ and $u_2+k v_2\in S_2$. Therefore, this sequence is in $\conv(S_1\cup S_2)$. Furthermore, it converges to $u_1+v_2$ as $k\rightarrow\infty$ which implies $u_1+v_2\in\clconv(S_1\cup S_2)$. A similar argument shows $u_2+v_1\in\clconv(S_1\cup S_2)$ and proves the claim.
\end{proof}

\subsection{Proofs of Section \ref{sec:SingleConvex}}

\begin{proof}[of Proposition~\ref{prop:ClosedSuff}]
Let $S_1^+:=S_1+\rec S_2$ and $S_2^+:=S_2+\rec S_1$. We have $\conv(S_1\cup S_2)\subseteq\clconv(S_1\cup S_2)=\conv(S_1^+\cup S_2^+)$ by Lemma~\ref{lem:closure}. We are going to show $\conv(S_1^+\cup S_2^+)\subseteq\conv(S_1\cup S_2)$ to prove that $\conv(S_1\cup S_2)$ is closed when \eqref{eq:ClosedSuff} is satisfied. Let $x^+\in S_1^+$. Then there exist $u_1\in S_1$ and $v_2\in\rec(S_2)$ such that $x^+=u_1+v_2$. If $c_2^\top v_2>0$, then there exists $\epsilon\geq 1$ such that $x^+ +\epsilon v_2\in S_2$ and we have $x^+\in\conv(S_1\cup S_2)$. Otherwise, $c_2^\top v_2=0$, and by the hypothesis, $c_1^\top v_2\geq 0$. This implies $x^+\in S_1$, and thus $S_1^+\subseteq\conv(S_1\cup S_2)$. Through a similar argument, one can show $S_2^+\subseteq\conv(S_1\cup S_2)$. Hence, $S_1^+\cup S_2^+\subseteq\conv(S_1\cup S_2)$. Taking the convex hull of both sides yields $\conv(S_1^+\cup S_2^+)\subseteq\conv(S_1\cup S_2)$.

For the converse, suppose condition (i) holds, and let $x^*\in S_1$ be such that $c_2^\top x^*\leq c_2^\top x$ for all $x\in S_1$. Note that $c_2^\top x^*<c_{2,0}$ since otherwise, $S_1\subseteq S_2$. Pick $\delta>0$ such that $x':=x^*+\delta r^*\notin S_1$. Then $x'\notin S_2$ too because $c_2^\top x'=c_2^\top x^*<c_{2,0}$. For any $0<\lambda<1$, $x_1\in S_1$, and $x_2\in S_2$, we can write $c_2^\top(\lambda x_1+(1-\lambda)x_2)\geq\lambda c_2^\top x^*+(1-\lambda)c_{2,0}>c_2^\top x'$. Hence, $x'\notin\conv(S_1\cup S_2)$. On the other hand, $x'\in S_1^+\subseteq\conv(S_1^+\cup S_2^+)=\clconv(S_1\cup S_2)$ where the last equality follows from Lemma~\ref{lem:closure}.
\end{proof}

\begin{proof}[of Corollary~\ref{cor:ClosedSuff}]
Suppose there exist $\beta_1,\beta_2\in\R$ such that $c_1-\beta_2c_2\in\K^*$ and $c_2-\beta_1c_1\in\K^*$. Consider the following minimization problem
\begin{equation*}
\inf_u\{c_1^\top u:\;c_2^\top u=0,u\in\K\}
\end{equation*}
and its dual
\begin{equation*}
\sup_\delta\{0:\;c_1-\delta c_2\in\K^*\}.
\end{equation*}
Because $\beta_2$ is a feasible solution to the dual problem, we have $c_1^\top u\geq 0$ for all $u\in\K$ such that $c_2^\top u=0$. Similarly, one can use the existence of $\beta_1$ to show that the second part of \eqref{eq:ClosedSuff} holds too. Then by Proposition~\ref{prop:ClosedSuff}, $\conv(C_1\cup C_2)$ is closed.
\end{proof}

\subsection{Proofs of Section \ref{sec:MultipleIneqs}}

\begin{proof}[of Proposition~\ref{prop:nontight}]
Every undominated valid inequality has to be tight on either $C_1$ or $C_2$; otherwise, we can just increase the right-hand side to obtain a dominating valid inequality. By Proposition~\ref{prop:VLIReduced}, undominated valid inequalities are of the form $\mu^\top x\geq c_{2,0}$  with $(\mu,\alpha_1,\alpha_2,\beta_1,\beta_2)$ satisfying \eqref{eq:VLIReduced}. In particular, we have $\beta_1,\beta_2>0$ such that $\min\{\beta_1c_{1,0},\beta_2c_{2,0}\}=c_{2,0}$. Now consider the following minimization problem
\begin{equation*}
\inf_u\{\mu^\top u:\;u\in C_1\}
\end{equation*}
and its dual
\begin{equation*}
\sup_\delta\{\delta c_{1,0}:\;\mu-\delta c_1\in\K^*,\;\delta\geq 0\}.
\end{equation*}
$C_1$ is a strictly feasible set by Assumption~\ref{As:A2}, so strong duality applies to this pair of problems and the dual problem admits an optimal solution $\delta^*\geq\beta_1>0$. Then
\begin{equation*}
\sign\{\delta^* c_{1,0}\}=\sign\{c_{1,0}\}=c_{1,0}>c_{2,0}.
\end{equation*}
Hence, the inequality $\mu^\top x\geq\mu_0$ cannot be tight on $C_1$.
\end{proof}

\begin{proof}[of Proposition~\ref{prop:TightOnOne}]
Let $\nu^\top x\geq c_{2,0}$ be a valid inequality of the form \eqref{eq:VLIReduced}. Then there exist $\alpha_1,\alpha_2,\beta_1,\beta_2$ such that $(\nu,\alpha_1,\alpha_2,\beta_1,\beta_2)$ satisfies \eqref{eq:VLIReduced}. In particular, $\min\{\beta_1c_{1,0},\beta_2c_{2,0}\}=c_{2,0}$. If $\beta_2c_{2,0}=c_{2,0}$, then $\beta_2=1$ and $\nu^\top x\geq c_{2,0}$ already has the desired form. Therefore, suppose $\beta_2c_{2,0}>c_{2,0}$. Then $c_{2,0}=\beta_1c_{1,0}=\beta_1c_{2,0}$ and thus $\beta_1=1$. We are going to show that $\nu^\top x\geq c_{2,0}$ is either dominated or has itself an equivalent representation \eqref{eq:VLIReduced} of the type claimed in the lemma.

Let $\delta\in\capD_2(c_1,c_2)$ and let $\gamma:=c_1-\delta c_2\in\K^*$. Then $\delta\geq 0$ because $\capD_1(c_1,c_2)=\emptyset$, and using Lemma~\ref{lem:inconsistent}, we have $\delta c_{2,0}<c_{1,0}=c_{2,0}$, which implies $\delta<1$. Then we can select $0<\lambda<1$ such that $\lambda\beta_2 c_{2,0}+(1-\lambda)\delta c_{2,0}=c_{2,0}$. Let us define $\alpha_1':=\lambda\alpha_1$, $\alpha_2':=\lambda\alpha_2+(1-\lambda)\gamma$, $\beta_2':=\lambda\beta_2+(1-\lambda)\delta=1$, and $\mu:=\lambda\nu+(1-\lambda)c_1=\lambda\alpha_1+c_1$. With these definitions, $\mu^\top x\geq c_{2,0}$ is a valid inequality for $\clconv(C_1\cup C_2)$ because $(\mu,c_{2,0},\alpha_1',\alpha_2',1,1)$ satisfies \eqref{eq:VLI}. Furthermore, $\nu-\mu=(1-\lambda)\alpha_1\in\K^*$. This shows that $\nu^\top x\geq c_{2,0}$ is dominated by $\mu^\top x\geq c_{2,0}$ if $\alpha_1\neq 0$ and has an equivalent representation \eqref{eq:VLI} with $\beta_2'=1$ if $\alpha_1=0$. In the first case, we are done. In the second case, we are done if $\alpha_2'\in\bd\K^*$. Otherwise, we can find a valid inequality that dominates $\nu^\top x\geq 1$ as in the proof of Proposition~\ref{prop:VLIReduced}.
\end{proof}

\subsection{Proofs of Section \ref{sec:p-order}}

\begin{proof}[of Corollary~\ref{cor:p-order}]
Let $q\in(1,\infty)$ be such that $\frac{1}{p}+\frac{1}{q}=1$. The sets $C_1$ and $C_2$ satisfy Assumptions~\ref{As:A1} and \ref{As:A2} because $e^i\notin\K_q^n$. Since we are considering a split disjunction, $\conv(C_1\cup C_2)$ is closed by Corollary~\ref{cor:ClosedSuff}. Using Propositions~\ref{prop:VLIReduced} and \ref{prop:closedness} and Lemma~\ref{lem:tight}, we see that any undominated valid linear inequality for $\conv(C_1\cup C_2)$ has the form $\mu^\top x\geq 1$ with $(\mu,\alpha_1,\alpha_2)$ satisfying
\begin{equation*}
\begin{gathered}
\mu=\alpha_1+t_1e^i,\\
\mu=\alpha_2-t_2e^i,\\
\alpha_1,\alpha_2\in\bd\K_q^n.
\end{gathered}
\end{equation*}
Let
\begin{equation*}
\capM\!:=\left\{\mu\in\R^n\!:\,\exists\alpha_1,\alpha_2\in\bd\K_q^n~\text{ s.t. }~\mu=\alpha_1+t_1e^i=\alpha_2-t_2e^i\right\}.
\end{equation*}
Then we can write
\begin{align*}
\capM=&\left\{\mu\in\R^n:\;\norm{\tilde{\mu}-t_1\tilde{e}^i}_q=\mu_n,\;
\norm{\tilde{\mu}+t_2\tilde{e}^i}_q=\mu_n\right\}\\
=&\left\{\mu\in\R^n:\;
\norm{\tilde{\mu}-t_1\tilde{e}^i}_q=\norm{\tilde{\mu}+t_2\tilde{e}^i}_q,\;
\norm{\tilde{\mu}+t_2\tilde{e}^i}_q=\mu_n
\right\}\\
=&\left\{\mu\in\R^n:\;2\mu_i=t_1-t_2,\;\norm{\tilde{\mu}+t_2\tilde{e}^i}_q=\mu_n\right\}.
\end{align*}
Therefore, we obtain
\begin{align*}
x&\in\conv(C_1\cup C_2)
\Leftrightarrow x\in\K_p^n\text{ and }\mu^\top x\geq 1\;~\forall\mu\in\capM.\\
&\Leftrightarrow x\in\K_2^n\text{ and }\inf_\mu\left\{\mu^\top x:\;\mu\in\capM(\beta,1)\right\}\geq 1.
\end{align*}
Note that the second equality constraint in the description of $\capM$ makes this optimization problem non-convex. However, we can relax this problematic constraint to an inequality without any loss of generality. In fact, consider the relaxation
\begin{equation*}
\inf_\mu\left\{\mu^\top x:\;2\mu_i=t_1-t_2,\;\norm{\tilde{\mu}+t_2\tilde{e}^i}_q\leq\mu_n\right\}.
\end{equation*}
The feasible region of this relaxation is the intersection of a hyperplane with a closed, convex cone shifted by the vector $t_2e^i$. Any solution which is feasible to the relaxation but not the original problem can be expressed as a convex combination of solutions feasible to the original problem. Because we are optimizing a linear function, this shows that the relaxation is equivalent to the original problem. Thus, we have
\begin{align*}
x&\in\conv(C_1\cup C_2)\\
&\Leftrightarrow x\in\K_p^n\text{ and }
\inf_\mu\left\{\mu^\top x:\;2\mu_i=t_1-t_2,\;\mu+t_2e^i\in\K_q^n\right\}\geq 1.
\end{align*}
The minimization problem in the last line above is strictly feasible since $e^n$ is a recession direction of the feasible region and belongs to $\intt\K_q^n$. Hence, its dual problem is solvable whenever it is feasible, strong duality applies, and we can replace the problem in the last line with its dual without any loss of generality. The dual problem is given by
\begin{align*}
&\sup_{\rho,\tau}\left\{-t_2(e^i)^\top\rho+(t_1-t_2)\tau:\;\rho+2\tau e^i=x,\;\rho\in\K_p^n\right\}\\
=&\sup_{\tau}\left\{-t_2x_i+(t_1+t_2)\tau:\;x-2\tau e^i\in\K_p^n\right\}
\end{align*}
and it is feasible for $x\in\K_p^n$ with $\tau=0$. Thus, we obtain
\begin{align*}
x\in&\conv(C_1\cup C_2)\\
&\Leftrightarrow x\in\K_p^n\text{ and }\sup_{\tau}\left\{-t_2x_i+(t_1+t_2)\tau:\;x-2\tau e^i\in\K_p^n\right\}\geq 1.
\end{align*}
Note that, for any given $x\in\K_p^n$, the problem above involves maximizing a linear function over a closed interval and the coefficient of $\tau$ in the objective function is positive; therefore, the optimum solution $\tau^*(x)$ will occur at the larger of the two endpoints of this interval which is
\begin{equation*}
\tau^*(x)=\frac{1}{2}x_i+\left(x_n^p-\sum_{j\notin\{i,n\}}|x_j|^p\right)^{\frac{1}{p}}.
\end{equation*}
Therefore,
\begin{align}
x\in&\conv(C_1\cup C_2)\nonumber\\
&\Leftrightarrow x\in\K_p^n\mbox{ and }-t_2x_i+(t_1+t_2)\tau^*(x)\geq 1.\nonumber\\
&\Leftrightarrow x\in\K_p^n\mbox{ and }\frac{2-(t_1-t_2)x_i}{t_1+t_2}\leq\left(x_n^p-\sum_{j\notin\{i,n\}}|x_j|^p \right)^{\frac{1}{p}}.\label{eq:pMain}
\end{align}

The validity and convexity of the above inequality follow from its derivation. Moreover, due to its derivation, this inequality precisely captures all of the undominated valid linear inequalities for $C_1\cup C_2$. Hence, together with the cone constraint $x\in\K_p^n$, the inequality \eqref{eq:pMain} is sufficient to describe $\conv(C_1\cup C_2)$.

In order to arrive at Corollary~\ref{cor:p-order}, we further claim that, for all $x\in\K_p^n$,
\begin{equation}\label{eq:pComplement}
\frac{-2+(t_1-t_2)x_i}{t_1+t_2}\leq\left(x_n^p-\sum_{j\notin\{i,n\}}|x_j|^p\right)^{\frac{1}{p}}.
\end{equation}
Let $u\in\K_p^n$. When $(t_1-t_2)u_i\leq 2$, the left-hand side of \eqref{eq:pComplement} is non-positive and the claim is satisfied trivially. Otherwise, $(t_1-t_2)u_i>2$ which implies that either $t_1 u_i>1$ ($u\in C_1$) or $-t_2 u_i>1$ ($u\in C_2$). Because $u\in\K_p^n$, we can write $\left(u_n^p-\sum_{j\notin\{i,n\}}|u_j|^p\right)^{\frac{1}{p}}\geq |u_i|$; therefore, all we need to show is that $|u_i|\geq\frac{(t_1-t_2)u_i-2}{t_1+t_2}$ or, equivalently, $(t_1+t_2)|u_i|\geq(t_1-t_2)u_i-2$. Suppose $u\in C_1$. Then we have $t_1u_i\geq 1$, and since $C_1\cap C_2=\emptyset$, we get $u\not\in C_2$, implying $-t_2u_i<1$. Therefore, $-2+(t_1-t_2)u_i<(2t_2u_i)+(t_1-t_2)u_i=(t_1+t_2)u_i\leq|(t_1+t_2)u_i|$ which proves the desired relation. Now suppose $u\in C_2$. Similarly, we obtain $t_1u_i<1$ which allows us to write $-2+(t_1-t_2)u_i<-2t_1u_i+(t_1-t_2)u_i=-(t_1+t_2)u_i\leq|(t_1+t_2)u_i|$ and completes the proof of the claim.

By combining the inequalities \eqref{eq:pMain} and \eqref{eq:pComplement}, we conclude that
\begin{equation*}
\frac{|2-(t_1-t_2)x_i|}{t_1+t_2}\leq\left(x_n^p-\sum_{j\notin\{i,n\}}|x_j|^p\right)^{\frac{1}{p}}.
\end{equation*}
Noting that $t_1+t_2>0$, taking the $p$-th power of both sides of the above inequality, and rearranging its terms, we arrive at
\begin{equation*}
\left(\left|(t_1-t_2)x_i-2\right|^p+( t_1+t_2)^p\sum_{j\notin\{i,n\}}|x_j|^p\right)^{\frac{1}{p}}\leq(t_1+t_2)x_n
\end{equation*}
which can be equivalently expressed as the desired conic valid inequality.
\end{proof}

\end{document}